\documentclass[10pt]{amsart}
\usepackage[OT1]{fontenc}
\usepackage{amsthm,amsmath}
\usepackage[numbers]{natbib}
\usepackage[colorlinks,citecolor=blue,urlcolor=blue]{hyperref}
\usepackage{array}
\usepackage{hhline}
\usepackage[table,xcdraw]{xcolor}
\usepackage{multirow}
\usepackage[toc,page,titletoc]{appendix}
\usepackage{xr-hyper}
\usepackage{hyperref}
\usepackage[british]{babel}
\usepackage{nameref}
\usepackage{amsfonts}
\usepackage{amssymb}
\usepackage{amsfonts}
\usepackage{mathrsfs}
\usepackage{amsfonts}
\usepackage{amstext}
\usepackage{amsopn}
\usepackage{pgfplots,geometry}
\usepackage{color}
\usepackage[]{subfig}
\usepackage[justification=justified]{caption}
\usepackage{tikz}
\usepackage{tikz-cd}
\usepackage{gensymb}
\usepackage[version=4]{mhchem}
\usepackage{graphicx}
\usepackage{chemfig}
\usepackage{url}
\usepackage[all]{xy}
\usepackage{lmodern}
\usepackage{pifont}
\usepackage[abs]{overpic}
\usepackage{texdraw}
\usepackage{extarrows}
\usepackage{bbm}
\usepackage[utf8]{inputenc}
\usepackage{mathtools}
\usepackage{enumitem}
\newcounter{tmp}
\allowdisplaybreaks
\usetikzlibrary{arrows, decorations.markings}
\usetikzlibrary{positioning}
\usetikzlibrary{fit}
\usetikzlibrary{backgrounds}
\usetikzlibrary{calc}
\usetikzlibrary{shapes}
\usetikzlibrary{mindmap}
\usetikzlibrary{decorations.text}
\pgfplotsset{compat=newest}

\textheight=8.6 true in
   \textwidth=5.6 true in
    \topmargin 20pt
\oddsidemargin 20pt
     \evensidemargin 20pt
     \setcounter{page}{1}
\numberwithin{equation}{section}
\theoremstyle{plain}
\newtheorem{theorem}{Theorem}[section]
\newtheorem{lemma}[theorem]{Lemma}
\newtheorem{corollary}[theorem]{Corollary}
\newtheorem{proposition}[theorem]{Proposition}

\newtheorem{conjecture}[theorem]{Conjecture}

\theoremstyle{definition}
\newtheorem{definition}[theorem]{Definition}
\newtheorem{example}[theorem]{Example}
\theoremstyle{remark}
\newtheorem{remark}[theorem]{Remark}

\newcommand{\cC}{\mathcal{C}}

\newcommand{\cE}{\mathcal{R}}

\newcommand{\cI}{\mathcal{I}}

\newcommand{\cL}{\mathcal{L}}

\newcommand{\cR}{\mathcal{R}}
\newcommand{\cS}{\mathcal{S}}
\newcommand{\cV}{\mathcal{C}}

\newcommand{\supp}{{\rm supp\,}}

\newcommand{\srcsize}{\@setfontsize{\srcsize}{5pt}{5pt}}

\newcommand\blue[1]{\textcolor{blue}{#1}}
\newcommand\red[1]{\textcolor{red}{#1}}

\newcommand\sbullet[1][.5]{\mathbin{\vcenter{\hbox{\scalebox{#1}{$\bullet$}}}}}

\makeatletter
\DeclareRobustCommand\widecheck[1]{{\mathpalette\@widecheck{#1}}}
\def\@widecheck#1#2{%
    \setbox\z@\hbox{\m@th$#1#2$}%
    \setbox\tw@\hbox{\m@th$#1%
       \widehat{%
          \vrule\@width\z@\@height\ht\z@
          \vrule\@height\z@\@width\wd\z@}$}%
    \dp\tw@-\ht\z@
    \@tempdima\ht\z@ \advance\@tempdima2\ht\tw@ \divide\@tempdima\thr@@
    \setbox\tw@\hbox{%
       \raise\@tempdima\hbox{\scalebox{1}[-1]{\lower\@tempdima\box
\tw@}}}%
    {\ooalign{\box\tw@ \cr \box\z@}}}
\makeatother

\ExplSyntaxOn
\keys_define:nn { mhchem }
 {
  arrow-min-length .code:n =
   \cs_set:Npn \__mhchem_arrow_options_minLength:n { {#1} } 
 }
\ExplSyntaxOff
\mhchemoptions{arrow-min-length=1em}

\allowdisplaybreaks
\begin{document}
\title[Exponential ergodicity of endotactic SRSs]
  {Exponential ergodicity of  first order endotactic stochastic reaction systems}
\author{Chuang Xu}
\thanks{Department of Mathematics,
University of Hawai'i at M\={a}noa, Honolulu, Hawai'i,
96822, USA. Email: {chuangxu@hawaii.edu}. This work was supported by a start-up grant from the University of Hawai'i at M\={a}noa and a Travel Support for Mathematicians from the Simons Foundation (MP-TSM-00002379).}

\subjclass[2020]{37N25, 60J27, 97C42}

\date{\today}

\noindent

\begin{abstract}
Chemical reaction networks are a widely accepted modeling framework for diverse science phenomena stemming from all disciplines of science, such as biochemistry, ecology, epidemiology, social and political science. In this paper we prove that every first order \emph{endotactic} stochastic mass-action reaction system (SMART) is essential (i.e., every state in the state space is within a closed communicating class of the underlying continuous time Markov chain model)  
and is exponentially ergodic. The proof is based on a recent result on first order endotactic reaction networks in a companion paper [C.X., First order endotactic reaction networks. arXiv:2409.01598v2]. Besides, we show that a stochastic reaction system (of possibly nonlinear propensities)  \emph{dominated} by a first order endotactic SMART is exponentially erogdic. To demonstrate the applicability of results, we provide various examples of higher order SMART, including e.g., (1) SMART with a first order endotactic \emph{asymptotic limit} as well as,  (2) joint of translations of first order endotactic SMART.  
\end{abstract}

\keywords{Continuous time Markov chains, weakly reversible reaction networks; positive recurrence; asymptotic limit; network translations} 

\maketitle


\section{Introduction}

\begingroup
\setcounter{tmp}{\value{theorem}}
\setcounter{theorem}{0} 
\renewcommand\thetheorem{\Alph{theorem}}

\subsection*{Background} 
Reaction network is a framework that unifies diverse compartmental models \cite{JS93} in both deterministic and stochastic regimes. It is ubiquitously used in modeling phenomena from diverse science areas, including genetics \citep{B45}, systems biology \cite{G03,C15}, population processes \cite{QG21}, computer science \citep{SCWB08}, game theory \citep{VRDF14}, and social sciences \citep{DW05}. Studies of reaction networks have evolved over decades into a theory,  called Chemical Reaction Network Theory (CRNT) \cite{G03,F19}. 

Every reaction network can be represented by a directed graph, called a \emph{reaction graph}. Such a graph defines a dynamical system, called a \emph{reaction system}, which can be either an ordinary differential equation (ODE) accounting for the concentrations of species in a deterministic regime from a macroscopic scale, or a continuous time Markov chain (CTMC) for molecule counts of species in a stochastic regime from a microscopic scale. 
A reaction network modeled as a CTMC is called a \emph{stochastic reaction system} (SRS). 

Key mathematical concerns in CRNT include answering questions about qualitative dynamical properties of reaction systems based on properties of the associated reaction graphs. An arguably infamous open question is the so-called Global Attractor Conjecture (GAC) proposed as early as 1970s
\cite{HJ72}: \emph{Every complex-balanced mass-action system has a globally attracting positive equilibrium within each positive stoichiometric compatibility class}.

Endotacticity, a property of \emph{embedded} graphs (in the Euclidean space), was introduced by Craciun, Nazarov, and Pantea in \cite{CNP13} to study GAC by putting the conjecture in a broader context. In \cite{CNP13}, it is conjectured that \emph{every endotactic ($\kappa$-variable\footnote{``$\kappa$-variable''  means that the reaction rate constants are functions of time taking positive values in a compact subset of positive reals.}) 
 mass-action system as an ODE is \emph{permanent}}, which means that there exists a compact attractor which is compactly embedded in  $\mathbb{R}^d_{+}$.

A stochastic analogue of GAC \cite{CNP13}  is proved in the stochastic setting, due to the elegant observation of super-polynomial tail of the explicit Poisson-product form \cite{ACK10} of stationary distributions for complex-balanced SRS:
 \medskip
 
\begin{theorem}\cite{ACKK18}\label{thm:CB}
    Every complex-balanced \textbf{S}tochastic \textbf{M}ass-\textbf{A}ction \textbf{R}eaction sys\textbf{T}em (SMART) is ergodic\footnote{The original conclusion pertains to positive recurrence which is equivalent to ergodicity for irreducible CTMCs and hence for essential SMART.}
\end{theorem}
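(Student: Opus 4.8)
The plan is to derive ergodicity from the explicit stationary distribution of \cite{ACK10} together with standard positive-recurrence criteria for continuous time Markov chains. By the footnote it suffices to establish positive recurrence on each closed communicating class. First I would recall that a complex-balanced network is weakly reversible, so that for each such class $\Gamma \subseteq \mathbb{Z}^d_{\geq 0}$ the restricted chain is irreducible. Let $c \in \mathbb{R}^d_{>0}$ denote a complex-balanced equilibrium of the associated deterministic mass-action system. The result of \cite{ACK10} provides the product-form measure
$$\pi(x) = \frac{1}{Z_\Gamma} \prod_{i=1}^{d} \frac{c_i^{x_i}}{x_i!}, \qquad x \in \Gamma,$$
which satisfies the stationary equation $\pi Q = 0$ for the generator matrix $Q$.

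The second step is to observe that $\pi$ is genuinely a probability distribution: summing the unnormalised weights over the full lattice gives $\prod_i e^{c_i} = e^{\sum_i c_i} < \infty$, so the normalising constant $Z_\Gamma$ is finite on every class $\Gamma$. Provided the chain is non-explosive, an irreducible continuous time Markov chain that admits an invariant probability distribution is positive recurrent, and $\pi$ is then its unique stationary distribution; non-explosion I would obtain from the drift function introduced below via the standard criterion $\mathcal{A} V \leq cV + d$. Positive recurrence is equivalent to ergodicity by the footnote, which already proves the stated theorem.

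To upgrade this to the exponential rate that is the real payoff of the product form, I would exploit its super-polynomial tail through a Foster--Lyapunov argument. Writing $V(x) = -\log \pi(x)$ and using Stirling's approximation, $V$ agrees up to lower order terms with the classical Horn--Jackson free energy $\sum_i (x_i \log(x_i/c_i) - x_i + c_i)$, whose gradient is dissipated along the deterministic flow precisely because of complex-balancing. The goal is then to verify a drift inequality of the form $\mathcal{A} e^{\delta V} \leq -\gamma e^{\delta V} + b\,\mathbf{1}_{C}$ outside a finite set $C$, where $\mathcal{A}$ is the generator and $\delta, \gamma, b > 0$; the super-polynomial growth of $V$ is exactly what makes such an exponential Lyapunov function admissible.

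The main obstacle is this last drift estimate. The mass-action propensities $\lambda_{y\to y'}(x)$ grow only polynomially, while the exponential $e^{\delta V}$ amplifies the single-reaction increments $V(x+y'-y)-V(x)$, so one must show that the complex-balancing relation forces the leading order contribution of $\sum_{y \to y'} \lambda_{y\to y'}(x)\,(e^{\delta(V(x+y'-y)-V(x))}-1)$ to be strictly negative at infinity. Concretely I expect to reduce the discrete drift, after a first order expansion, to the deterministic dissipation inequality $\nabla V(x) \cdot \dot{x} \leq 0$ and then to extract a strict negative margin from the strict convexity of $V$ together with the fact that $c$ is the unique complex-balanced equilibrium in each compatibility class; controlling the higher order terms of the exponential uniformly in $x$ is the delicate part.
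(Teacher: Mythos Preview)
The paper does not supply its own proof of Theorem~\ref{thm:CB}; it is quoted from \cite{ACKK18} as background, with only the one-line hint ``due to the elegant observation of super-polynomial tail of the explicit Poisson-product form \cite{ACK10} of stationary distributions.'' Your first two steps---invoke the product-form stationary measure of \cite{ACK10}, note that the factorial weights are summable so that each closed class carries a genuine stationary probability, and conclude positive recurrence for the irreducible (non-explosive) chain---are exactly this observation and are correct for the stated claim.

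Your third step, however, the attempted upgrade to exponential ergodicity via a free-energy Lyapunov function, is not merely ``delicate'': it is false in general. The paper says so explicitly right after stating Theorem~\ref{thm:CB}: ``even complex-balancing \ldots\ does \emph{not} yield exponential-ergodicity,'' and Example~\ref{ex:sub-exponential-ergodicity} (the reversible second-order network $0\rightleftharpoons S_1+S_2$, $S_2\rightleftharpoons 2S_2$) is cited from \cite{ACFK25,KK25} as a complex-balanced SMART that is only \emph{sub}-exponentially ergodic. Hence no inequality of the form $\mathcal{A}e^{\delta V}\le -\gamma e^{\delta V}+b\,\mathbf{1}_C$ can hold for all complex-balanced systems, and the ``strict negative margin'' you hope to extract from the deterministic dissipation of the Horn--Jackson free energy does not materialise in such examples. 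The theorem asserts only ergodicity, and for that your argument is fine; the exponential rate should be dropped.
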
  
\medskip

A more general conjecture, despite having informally existed for roughly similar long time since the seminal works by Kurtz \cite{K70,K71,K76} in the 1970s, is formally posed relatively recently by Anderson and Kim \cite{AK18} in 2018 which, if true, generalizes Theorem~\ref{thm:CB}:
\begin{conjecture}\label{conj:positive-recurrence}
Every weakly reversible SMART is ergodic.
\end{conjecture}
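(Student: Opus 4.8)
The plan is to establish a Foster--Lyapunov drift condition of exponential type. Writing $\cA$ for the generator of the underlying CTMC, it suffices to exhibit a norm-like function $V\colon \mathbb{Z}^d_{\ge 0}\to[1,\infty)$, constants $c>0$ and $d<\infty$, and a finite set $K$ such that
\[
\cA V(x)=\sum_{y\to y'}\lambda_{y\to y'}(x)\bigl(V(x+y'-y)-V(x)\bigr)\le -c\,V(x)+d\,\mathbbm{1}_{K}(x).
\]
Since ergodicity here is understood for an essential system, every state lies in a single closed communicating class, so such a drift bound upgrades directly to exponential ergodicity by the standard Foster--Lyapunov theory for continuous time Markov chains. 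The whole problem is thereby concentrated in choosing $V$ and verifying the inequality off a compact set.

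For the candidate I would take my cue from Theorem~\ref{thm:CB}: in the complex-balanced case the stationary law is an explicit product of Poissons whose super-polynomial (factorial) tail is exactly what forces geometric decay. This suggests a separable Lyapunov function of the form $V(x)=\exp\bigl(\sum_i\theta_i\,\phi(x_i)\bigr)$ with $\phi$ growing like $x\log x$, or, more robustly, the stochastic analogue of the deterministic pseudo-Helmholtz free energy $\sum_i(x_i\log x_i-x_i)$ that serves as a Lyapunov function for every complex-balanced mass-action ODE. For a general weakly reversible system no closed-form stationary distribution is available, so $V$ would have to be built abstractly from the reaction-graph data rather than read off from an invariant measure.

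The core of the argument is then an asymptotic analysis of $\cA V(x)$ as $|x|\to\infty$ along each ray. For mass-action kinetics the propensity of $y\to y'$ scales like $x^{y}$, so in a fixed direction the drift is dominated by the reactions whose source complexes are extremal in that direction, and the sign of $\cA V$ there is governed by the reaction vectors $y'-y$ of precisely those dominant reactions. Weak reversibility guarantees that every reaction sits on a directed cycle, which one hopes supplies, in each direction, a dominant reaction whose vector points inward; making this precise is exactly the geometric content of endotacticity. I would therefore lean on the structural results of the companion paper to certify that the leading-order drift is strictly negative in every direction, patching the ray-by-ray estimates into a uniform bound outside a compact set.

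The hard part, and the reason the full conjecture is still open, is the treatment of \emph{degenerate} directions in which the leading-order terms of $\cA V$ cancel or fail to have a definite sign, forcing a tiered, multi-scale expansion that tracks successively lower-order contributions; for higher-order kinetics the interplay between the polynomial degrees of the competing propensities and the reaction geometry makes a uniform choice of the weights $\theta_i$ genuinely delicate. This is precisely why the present paper confines itself to the first-order endotactic regime, where the companion paper's classification pins down the dominant reactions in every tier and renders the drift computation tractable, rather than attempting the conjecture in full generality.
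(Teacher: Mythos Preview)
The statement you are addressing is Conjecture~\ref{conj:positive-recurrence}, which the paper explicitly presents as an \emph{open problem}; there is no proof in the paper to compare against. The paper's contribution is to resolve only the first-order endotactic subcase (Theorem~\ref{thm:essential_loose_version}), and you yourself acknowledge in the final paragraph that ``the full conjecture is still open.'' So what you have written is not a proof but a sketch of a strategy together with an explanation of its obstructions.

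That said, there is a concrete reason your proposed route cannot succeed in full generality, and it is worth naming. You aim for an exponential-type drift inequality $\cA V\le -cV+d\mathbbm{1}_K$, which would yield \emph{exponential} ergodicity. But the paper's Example~\ref{ex:sub-exponential-ergodicity} exhibits a reversible (hence weakly reversible, indeed complex-balanced) second-order SMART that is ergodic yet only \emph{sub}-exponentially so. Hence no choice of $V$---entropy-like or otherwise---can satisfy the geometric drift condition for that system, and any approach to the full conjecture must target the weaker condition $\cA V\le -c$ (ergodicity) rather than $\cA V\le -cV$ (exponential ergodicity). Your instinct to use the free-energy functional is sound for the complex-balanced subclass (this is essentially Theorem~\ref{thm:CB}), but the upgrade to a geometric drift is exactly what fails there.

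A second, smaller point: the structural results you propose to lean on from the companion paper are specific to \emph{first-order} networks (Proposition~\ref{prop:endotactic=weakly-reversible} and Proposition~\ref{prop:path-back-to-0}), so they do not supply the inward-pointing dominant reactions you would need for the general conjecture. The tier-sequence picture you describe is indeed the engine behind several of the partial results in Table~\ref{tab:summary_of_known_results}, but extending it beyond bimolecular or low-dimensional settings is precisely where the conjecture remains unresolved.
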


Below we summarize \emph{partial} known results on the advances of this conjecture. For insightful counter-examples, the interested reader is referred to e.g., \cite{AM18,AL25}.

\begin{table}[h]
\begin{center}
\begin{tabular}{| p{4.6cm} | p{2.3cm}| p{1.5cm} | p{4.3cm} |} 
\hline
Class of SMART &  Results & \hspace{-.1cm}Reference & Main Approach\\ \hline
Bimolecular with other assumptions& exponential ergodicity (EE)\footnote{Here we do not specify the norm (either in total variation distance or $L^2$-norm) for exponential convergence.} & \cite{AK18,ACKN20,AK23} & tier sequence\\ \hline
Bimolecular, weakly reversible with a strongly connected reaction graph, and the set of complexes contain a multiple of a single species for every species & EE & \cite{ACK20}&tier sequence and a discrete embedded chain argument\\ \hline
One-species, weakly reversible & EE & \cite{H18} & Linear Lyapunov functions\\ \hline
One-dimensional, endotactic & EE  &\cite{WX20} & A criterion established in \cite{XHW23}\\ \hline
Bimolecular, triangular, weakly reversible & EE & \cite{LR23} & scaling approach and an adapted Lyapunov drift criterion\\ \hline
Ergodic &EE & \cite{ACFK25} & A modified path method and Poincar\'{e} inequality \\  \hline
Ergodic with a strong tier-1 cycle & sub-EE & \cite{KK25} & A modified path method and tier sequence \\ \hline
Bimolecular, endotactic with a stationary distribution & ergodicity & \cite{X25b} & A linear Lyapunov function\\ \hline
\end{tabular}
\end{center}
   \caption{A non-exhaustive list of  known results on ergodicity of SMART}
    \label{tab:summary_of_known_results}
\end{table}
Despite that transience and recurrence can be determined by up to three parameters of a SMART \cite{WX20,XHW23} in one dimension, the stability properties of CTMCs of \emph{linear} transition rates even just in the context of two-dimensional birth-death processes can be rather intricate (see \cite{Kesten76,H80} for the incomplete while likely most complete classification), in contrast to the classical stability theory for linear ODEs or linear compartmental systems \cite{JS93}. It is noteworthy that sufficient conditions for transience and recurrence (indeed, exponential ergodicity) for monomolecular SMART have been lately established in terms of the Hurwitz stability properties of a coefficient matrix (i.e., the Jacobian matrix of the corresponding linear  deterministic reaction system) in \cite[Corollary~9, Corollary~20]{CHX25}. It is noteworthy that expression of  existence of a linear Lyapunov function for exponential ergodicity \emph{in terms of an inequality for propensity functions} in the context of SRS appeared in the literature of CRNT, e.g. \cite{GBK14}. Nevertheless, it is non-trivial to identify a class of reaction network structures that meet such a condition without carrying out spectral analysis of the associated coefficient matrix. In contrast, instead of giving \emph{matrix conditions}, this paper provides sufficient \emph{network conditions} for exponential ergodicity, where spectral analysis becomes avoidable: One can determine exponential ergodicity of SRS by only \emph{looking} at the associated reaction graph.

\subsection*{Overview of Main Results}

A main result of this paper is as follows.

\begin{theorem}\label{thm:essential_loose_version}
Every first order\footnote{In this paper, first order and monomolecular is interchangeably used to refer to reaction networks whose reactants consist of no more than one molecule of species. In other references, e.g., \cite{JH07}, monomolecular may mean in a \emph{narrower} sense that products are also of no more than one molecule.} endotactic SMART is essential and exponentially ergodic.
\end{theorem}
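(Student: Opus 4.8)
The plan is to deduce a geometric (Foster--Lyapunov) drift inequality from a single \emph{linear} Lyapunov function, with the crucial linear functional supplied by the structural theory of the companion paper, and then to read off both essentiality and exponential ergodicity from it. The approach is in the spirit of the linear-Lyapunov-function methods of \cite{H18,WX20}, the new input being that endotacticity delivers the requisite functional from the network structure alone.

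First I would exploit the defining feature of first order kinetics. Because every reactant complex consists of at most one molecule, each mass-action propensity is \emph{affine} in the state (a constant for a reaction out of the zero complex, and $\ka x_i$ for a reaction out of $S_i$), with none of the boundary corrections that complicate bimolecular systems. Consequently, for any weight vector $v$ the mean drift of the linear observable $V(x)=\la v,x\ra$ is \emph{exactly} affine on all of $\mathbb{Z}^d_{\ge 0}$: writing the mean infinitesimal drift as $f(x)=Ax+b$, with $A$ the coefficient (Jacobian) matrix of the associated deterministic system and $b\ge 0$ the inflow vector, one has $\cA V(x)=\la v,\,Ax+b\ra=\la A^{\top}v,\,x\ra+\la v,b\ra$ identically, not merely in the interior. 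This turns the analysis of $\cA V$ into elementary linear algebra.

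The heart of the argument is to produce a strictly positive \emph{Lyapunov vector}, and this is where I would invoke the companion paper: its structural characterization of first order endotactic networks is precisely what guarantees, on each unbounded closed class and after accounting for the conserved quantities, a vector $v$ that is positive in the unbounded coordinate directions and strictly contracting, i.e. $A^{\top}v\le-\epsilon v$ modulo the conserved directions, for some $\epsilon>0$ (geometrically, the boundary reactions point strictly inward in the $v$-weighted direction). Taking $V(x)=\la v,x\ra$ then gives $\cA V\le-\epsilon V+\la v,b\ra$; since the conserved directions are bounded on the class, $V$ is proper there, and absorbing the constant on a finite sublevel set $C$ upgrades this to the geometric drift condition $\cA V\le-cV+d\,\mathbf 1_{C}$ with suitable $c,d>0$. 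By the standard Foster--Lyapunov criterion for continuous time Markov chains this yields exponential ergodicity on every infinite closed communicating class, while the bounded (finite) classes cut out by conservation laws are exponentially ergodic automatically. Essentiality is a combinatorial, rate-independent property of the transition graph induced on $\mathbb{Z}^d_{\ge 0}$ by the reaction vectors, which I would read off from the companion paper's description of the reachability structure of first order endotactic networks; alternatively it follows a posteriori, since positive recurrence leaves no inessential states in the support and the inward-pointing boundary behaviour furnished by endotacticity precludes a transient ``escaping'' region.

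The main obstacle is the middle step: converting the \emph{geometric} endotacticity condition on the embedded reaction graph into the \emph{algebraic} existence of the contracting Lyapunov vector $v$, done uniformly and with correct bookkeeping of the conservation laws (which make $A$ singular and confine the dynamics to stoichiometric compatibility classes) and of the boundary $\partial\mathbb{Z}^d_{\ge 0}$. This is exactly the content that the companion paper is relied upon to provide; granting it, the remaining stochastic lifting to exponential ergodicity and essentiality is the routine drift-condition machinery sketched above.
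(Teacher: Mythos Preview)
Your proposal is correct and follows essentially the same route as the paper: decompose the network into the weakly connected component $\cR^0$ containing the zero complex and the conservative WRDZ remainder $\cR^{\sbullet}$, use the companion paper to extract Hurwitz stability of the net flow matrix $A_0$ of $\cR^0$ and hence a positive vector $v_0$ with $A_0v_0<0$, take the linear Lyapunov function $V(x)=(v_0,\mathbf 1)\cdot x$ to obtain the geometric drift $\cL V\le -cV+\text{const}$, and read off essentiality from the combinatorial reachability structure provided by the companion paper. One caveat: your ``a posteriori'' alternative for essentiality is not sound\,---\,a geometric drift inequality on the full ambient space does not by itself rule out open communicating classes that eventually feed into closed ones\,---\,so only your primary (combinatorial) route is valid, and that is precisely what the paper does via Propositions~\ref{prop:endotactic=weakly-reversible} and~\ref{prop:path-back-to-0}.
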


It is probably worth pointing out first that \emph{not} every first order SMART is so. The example below demonstrates that monomolecular SMART may even undergo \emph{bifurcations}. 

\begin{example}
Consider the following monomolecular SMART:

\[\mathcal{R}\colon 
\begin{tikzcd}[column sep=3em,row sep=3em]
S_2 \arrow[d,shift left=0.5ex,"\kappa_{22}"] \arrow[r,shift left=0.5ex,"\kappa_{12}"]& S_1+S_2\\
0\arrow[r,shift left=0.5ex,"1"]\arrow[u,shift left=0.5ex,"1"]&S_1 \arrow[l,shift left=0.5ex,"\kappa_{11}"]  \arrow[u,shift left=0ex, swap, "\kappa_{21}"]
\end{tikzcd}
\]
where $\kappa_{ij}>0$ for $i,j=1,2$. It is known from \cite[Theorem~4]{Kesten76} and \cite[Theorem~2]{H80} (c.f. \cite[Appendix I(B)]{H80}) that $\mathcal{R}$ modeled by an irreducible CTMC (indeed a birth-death process) on $\mathbb{N}^2_0$ is recurrent if and only if $$\kappa_{11}\kappa_{22}\ge\kappa_{12}\kappa_{21}$$ 
Moreover, we know $\mathcal{R}$ is exponentially ergodic if the inequality holds strictly \cite[Corollary~9]{CHX25}. Hence this SRS undergoes a \emph{bifurcation} (regarding recurrence) when $\kappa_{12}\kappa_{21}-\kappa_{11}\kappa_{22}$ crosses zero.
\end{example}

Theorem~\ref{thm:essential_loose_version} is a consequence of Theorem~\ref{thm:essential} and Theorem~\ref{thm:exp-ergodicity-endotactic}. The proof relies on a construction of a linear Lyapunov function, based on characterization (Proposition~\ref{prop:endotactic=weakly-reversible} and Proposition~\ref{prop:path-back-to-0}) of first order endotactic reaction networks in a companions work \cite{X25a}. As a consequence, \emph{every first order weakly reversible SMART is essential and \emph{exponentially} ergodic}, since weakly reversible reaction graphs are endotactic \cite{CNP13}.

It is noteworthy that every first order weakly reversible SMART is ergodic as a result of Theorem~\ref{thm:CB} since it is  complex-balanced \cite{ACK10}. Hence the result in this paper tells \emph{slightly more} than ergodicity for this class of reaction systems. 

Despite endotacticity is sufficient for exponential ergodicity for first order SMART, it (or even strong endotacticity) is in general \emph{insufficient} for ergodicity or even for recurrence or non-explosivity, as evidenced by various known examples   \cite{AM18,ACKN20}; even complex-balancing (which yields weak reversibility and hence endotacticity) does \emph{not} yield exponential-ergodicity \cite{ACFK25,KK25} as supported by the following popular example.
\begin{example}\label{ex:sub-exponential-ergodicity}
    The following second order reversible SMART is shown to be sub-exponentially ergodic in \cite{ACFK25} (see also \cite{KK25}), regardless of the positive reaction rate constants :
    \begin{equation}
        0\ce{<=>[][]} S_1+S_2\quad S_2\ce{<=>[][]} 2S_2
    \end{equation}
\end{example}


The loose statement of an extension of Theorem~\ref{thm:essential_loose_version} to SRS with \emph{nonlinear propensities} is given below.

\begin{theorem}\label{thm:nonlinear_loose_version}
    An SRS ``dominated'' by a first order endotactic SMART is exponentially ergodic on their common closed communicating classes.
\end{theorem}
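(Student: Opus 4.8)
The plan is to transfer the linear Lyapunov function that establishes Theorem~\ref{thm:essential_loose_version} for the dominating system directly to the dominated one, exploiting the fact that a \emph{linear} drift decomposes reaction-by-reaction. Let $\mathcal{R}$ denote the dominating first order endotactic SMART, with generator $\mathcal{A}$, reaction (stoichiometric) vectors $\xi_1,\dots,\xi_r$ and mass-action propensities $\lambda_1,\dots,\lambda_r$, and let $\widetilde{\mathcal{R}}$ be the SRS under study, sharing the same reaction vectors but carrying possibly nonlinear propensities $\widetilde\lambda_1,\dots,\widetilde\lambda_r$ and generator $\widetilde{\mathcal{A}}$. By the proof of Theorem~\ref{thm:exp-ergodicity-endotactic} there is a strictly positive weight $v\in\mathbb{R}^d_{>0}$ such that $V(x)=1+\langle v,x\rangle$ is norm-like on $\mathbb{N}_0^d$ and satisfies a Foster--Lyapunov drift inequality $\mathcal{A}V(x)\le -cV(x)+d\,\mathbf{1}_{K}(x)$ for constants $c>0$, $d<\infty$ and a finite set $K$. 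I take the operative meaning of ``domination'' to be the one that makes this particular $V$ portable: for every reaction direction, $\widetilde\lambda_k(x)\ge\lambda_k(x)$ whenever $\langle v,\xi_k\rangle<0$ and $\widetilde\lambda_k(x)\le\lambda_k(x)$ whenever $\langle v,\xi_k\rangle>0$, these inequalities being required only outside some finite set (so that, e.g., an \emph{asymptotic limit} that is first order endotactic still qualifies).

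The first step is the drift comparison. Since $V$ is linear, $\widetilde{\mathcal{A}}V(x)=\sum_{k}\widetilde\lambda_k(x)\,\langle v,\xi_k\rangle$ and $\mathcal{A}V(x)=\sum_{k}\lambda_k(x)\,\langle v,\xi_k\rangle$. Comparing term by term: a direction with $\langle v,\xi_k\rangle<0$ contributes a more negative amount under $\widetilde\lambda_k\ge\lambda_k$, a direction with $\langle v,\xi_k\rangle>0$ contributes a smaller amount under $\widetilde\lambda_k\le\lambda_k$, and directions orthogonal to $v$ contribute nothing. Hence $\widetilde{\mathcal{A}}V(x)\le \mathcal{A}V(x)$ for all $x$ outside the finite exceptional set, and combining with the drift inequality for $\mathcal{A}$ yields $\widetilde{\mathcal{A}}V(x)\le -cV(x)+d'\,\mathbf{1}_{K'}(x)$ for a (possibly enlarged) finite set $K'$ absorbing both exceptional sets.

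The second step is to invoke the standard Foster--Lyapunov criterion for exponential (geometric) ergodicity of continuous-time Markov chains (Meyn--Tweedie / Down--Meyn--Tweedie): a norm-like $V\ge 1$ with $\widetilde{\mathcal{A}}V\le -cV+d'\mathbf{1}_{K'}$ on a petite set $K'$ forces non-explosion and yields geometric ergodicity once the chain is irreducible. I restrict the analysis to a closed communicating class $\Gamma$ common to both chains, as the statement requires. On $\Gamma$ the dominated chain is irreducible, every finite subset is petite, $V|_\Gamma$ remains norm-like (because $v>0$ and $\Gamma\subseteq\mathbb{N}_0^d$), and the inherited drift inequality holds; these are exactly the hypotheses of the criterion, giving exponential ergodicity on $\Gamma$.

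I expect the main obstacle to be not the drift transfer itself --- which is essentially algebraic --- but the bookkeeping around the \emph{communicating-class structure}. Replacing mass-action propensities by dominated nonlinear ones can switch reactions on or off at particular states and thereby reshape the closed communicating classes, so $\Gamma$ need not coincide with a closed communicating class of $\mathcal{R}$; this is precisely why the conclusion is phrased ``on their common closed communicating classes,'' and a careful definition of domination must guarantee both that $\Gamma$ is invariant and irreducible for $\widetilde{\mathcal{R}}$ and that the sign-keyed propensity inequalities actually hold throughout $\Gamma$ (outside a finite set). A secondary point worth verifying is that non-explosion of $\widetilde{\mathcal{R}}$ is genuinely delivered by the drift bound even though the nonlinear propensities may grow super-linearly: the comparison controls only the $v$-weighted drift, so one should confirm that this suffices for the Foster--Lyapunov non-explosion argument rather than needing a separate growth bound on $\sum_k\widetilde\lambda_k$.
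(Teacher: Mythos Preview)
Your approach is essentially the paper's: pick the linear Lyapunov function $V(x)=v\cdot x$ supplied by Theorem~\ref{thm:exp-ergodicity-endotactic}, show the drift transfers, and invoke Proposition~\ref{prop:drift_criterion_for_exp-ergodicity}. The argument you wrote is sound for the notion of ``domination'' you adopted.

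Where you and the paper diverge is in what ``dominated'' means. You take the two systems to share the \emph{same} reaction vectors, with the nonlinear propensities satisfying reaction-by-reaction sign-keyed inequalities against the mass-action ones. The paper instead requires the first order endotactic SMART $\widehat{\mathcal{R}}$ to be a \emph{sub-reaction system} of $\mathcal{R}$ with identical propensities on $\widehat{\mathcal{R}}$; the extra (possibly higher-order) reactions in $\mathcal{R}\setminus(\widehat{\mathcal{R}})^0$ are then controlled only in \emph{aggregate}, via the bound
\[
\sum_{y\to y'\in\mathcal{R}\setminus(\widehat{\mathcal{R}})^0}\lambda_{y\to y'}(x)\bigl(v\cdot(y'-y)\bigr)\le f(x)
\]
for some strictly sublinear $f$ (condition (\textbf{SUB})). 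This is strictly more permissive than your version in two ways: it allows \emph{cancellation} among extra reactions (cf.\ Example~\ref{ex:virtual source}, where the extra drift is exactly zero even though individual reactions push $v\cdot x$ up), and it tolerates a positive but \emph{sublinear} residual rather than requiring non-positivity outside a finite set. Your reaction-by-reaction condition corresponds roughly to Corollary~\ref{cor:exponential-ergodicity-for-nonlinear-models}(ii) rather than to the full Theorem~\ref{thm:exponential-ergodic-RN}. The proof mechanics, however, are the same in both setups: once either hypothesis is in place, one line of arithmetic gives $\mathcal{L}_{\mathcal{R}}V(x)\le -cV(x)$ for all but finitely many $x$, and the Foster--Lyapunov criterion finishes.

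Your closing worry about non-explosion is not an obstacle: the inequality $\mathcal{L}V\le -cV+d'$ already implies the linear-growth bound $\mathcal{L}V\le cV+d'$ needed for non-explosion, independently of how fast $\sum_k\widetilde\lambda_k$ grows.
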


For its precise statement, see Theorem~\ref{thm:exponential-ergodic-RN}; for a corollary with more applicable conditions, see Corollary~\ref{cor:exponential-ergodicity-for-nonlinear-models}. Below we provide an application of Theorem~\ref{thm:nonlinear_loose_version} to a higher order example.


\begin{example}\label{ex-nonlinear}
Consider the following third order SMART \[\mathcal{R}\colon 
\begin{tikzcd}[column sep=3em,row sep=3em]
&S_1+2S_2 \arrow[d,shift left=0ex,dashed,"\kappa_6"] & 2S_1+2S_2\\
S_2 \arrow[d,shift left=0ex,"\kappa_2"] &S_1+S_2\arrow[ur,xshift =1ex,yshift =1ex, pos=.7, dashed, "\kappa_4"]& 2S_1+S_2\arrow[lu, pos=0.2, swap, dashed, "\kappa_5"]\\
0\arrow[ur,xshift =1ex,yshift =1ex, pos=.7, "\kappa_3"]&S_1\arrow[ul, pos=0.2, swap, "\kappa_1"]&
\end{tikzcd}
\] 
which can be represented as the joint of a first order endotactic SMART $\mathcal{R}_1$ consisting of reactions represented by solid arrows and a translation of $\mathcal{R}_1$ (consisting of the rest reactions in dashed arrows) disrespecting reaction rate constants. 
It follows from Theorem~\ref{thm:nonlinear_loose_version} that $\cR$ is exponentially ergodic independent of the rate constants (see Example~\ref{ex:copies-exp-erg} for more details).
\end{example}
Other examples of higher order SRS with a certain network pattern built upon a first order endotactic SMART include e.g., \emph{SRS with a first order endotactic SMART asymptotic limit} (Example~\ref{ex:bimolecular}).

\subsection*{Outline} 
Notation and basic concepts of reaction networks are introduced in Section~\ref{sect:preliminaries}. Proof of Theorem~\ref{thm:essential_loose_version}  is given in Section~\ref{sect:main_results}. 
Proof of Theorem~\ref{thm:nonlinear_loose_version} is given in Section~\ref{sect:nonlinear}; various examples demonstrating the applicability of Theorem~\ref{thm:nonlinear_loose_version} is given in Section~\ref{sect:examples}. Brief discussion and outlooks are provided in Section~\ref{sect:discussion}. 
\endgroup
\section{Preliminaries}\label{sect:preliminaries}

\subsection*{Notation} Let $\mathbb{R}$ denote the set of real numbers, $\mathbb{R}_+\subseteq\mathbb{R}$ the non-negative reals, and $\mathbb{R}_{++}\subseteq\mathbb{R}$ the positive reals. Let $\mathbb{N}_0$ and $\mathbb{N}$ be the set of non-negative integers and that of positive integers, respectively. For $d\in\mathbb{N}$, let $[d]$ denote the set of consecutive integers from $1$ to $d$ $\{i\colon i=1,\ldots,d\}$.  For a vector $v=(v_1,\ldots,v_d)\in\mathbb{R}^d$ and for a set $V\subseteq\mathbb{R}^d$, let $\supp v\coloneqq\{j\colon v_j\neq0\}$ and $\supp V\coloneqq\cup_{v\in V}\supp v$ be be their \emph{support}, respectively. A vector $v\in\mathbb{R}^d$ is \emph{positive} and denoted $v>0$  if $v_i>0$ for all $i\in[d]$. A vector $v\in\mathbb{R}$ is \emph{negative} if $-v$ is positive. Let $\{e_i\}_{i=1}^d$ be the standard basis of $\mathbb{R}^d$, and $\mathbf{1}=\sum_{i=1}^de_i$ with the dependence on $d$ omitted which will be clear from the context. In contrast, \emph{$0$ is slightly abused for both a scalar and a vector}. For $x,y\in\mathbb{N}^d_0$, $x\ge y$ (component-wise), let $x^{\underline{y}}=\prod_{i=1}^d\prod_{\ell=0}^{y_i-1}(x_i-\ell)$ denote the \emph{descending factorial}. Let $\mathcal{M}_d(\mathbb{R})$ be the set of all $d$ by $d$ matrices with real entries. A matrix is \emph{Hurwitz stable} if all of its eigenvalues have negative real parts. A matrix is \emph{Metzler} if all of its off-diagonal entries are non-negative.

A \emph{reaction network} consists of three finite non-empty sets: the set of $d$ symbols, called \emph{species} $\mathcal{S}\coloneqq \{S_i\}_{i=1}^d$, the set of \emph{complexes} $\mathcal{C}\subseteq \{y=\sum_{i=1}^d\ell_iS_i\colon \ell_i\in\mathbb{N}_0\}$, and the set of \emph{reactions} $\mathcal{R}\subseteq \{y\to y'\colon y, y'\in\mathcal{C},\ y\neq y'\}$. Neglecting the symbols of species, every complex $y=\sum_{i=1}^d\ell_iS_i$ 
is identified as a vector $(\ell_1,\ldots,\ell_d)$ for convenience. Any reaction $y\to y'$ is indeed an ordered pair of complexes, where the complex $y$ is called the \emph{reactant} (or \emph{source}) and $y'$ the \emph{product}, and $y'-y$ is called the \emph{reaction vector}. By definition, \emph{all reaction vectors are non-zero}. A reaction network is represented by its \emph{reaction graph}\---a directed graph
$(\mathcal{C},\mathcal{R})$ with the set of vertices $\mathcal{C}$ and the set of directed edges $\mathcal{R}$. Moreover, let $\mathcal{C}_+$ denote the set of reactants. 
Since all information of a reaction network is encoded in the set $\mathcal{R}$, we simply denote the reaction network by its set of reactions $\mathcal{R}$. 
A reaction network is \emph{weakly reversible} if its reaction graph consists of disjoint strongly connected components. A species $S_i$ is \emph{purely catalytic} if there is no molecule change of $S_i$ in any reaction. 
For the ease of exposition and without loss of generality, we assume throughout that \emph{no reaction network has purely catalytic species}. Otherwise, by convention, one can always \emph{embed} the kinetic effect of purely catalytic species in the edge weights of the reaction graph. The linear span of all reaction vectors of a reaction network $\cR$ in the real field is called the \emph{stoichiometric subspace}, denoted $\mathsf{S}_{\mathcal{R}}$. The dimension $\dim\mathsf{S}_{\cR}$ of $\mathsf{S}_{\mathcal{R}}$ is often referred to as the \emph{dimension} of the reaction network $\cR$ and the orthogonal complement of $\mathsf{S}_{\cR}$ in $\mathbb{R}^d$ is denoted by $\mathsf{S}_{\mathcal{R}}^{\perp}$.  A reaction network is called \emph{conservative} if $\mathsf{S}_{\mathcal{R}}^{\perp}$ contains a positive vector. Let $\ell_{\cR}$ be the number of strongly connected components of $(\cC,\cR)$. The \emph{deficiency} of $\cR$ is defined to be the integer $\#\cC-\dim\mathsf{S}_{\cR}-\ell_{\cR}$, which is always non-negative  \citep{F19}. For a weakly reversible reaction network, its deficiency is equal to the number of \emph{linearly independent} equations for the edge weights of the reaction graph to meet in order for the reaction network to be \emph{complex-balanced} \citep{J12,F19}. 
Given any reaction $y\ce{->}y'$, the $\ell_1$-norm $\|y\|_1$ of the reactant is called the \emph{molecularity} of the reaction; and $\max_{y\to y'\in\cR}\|y\|_1$ is the \emph{molecularity} of a reaction network $\cR$. Both a reaction and a reaction network having  molecularity one are called \emph{monomolecular}.  A reaction $y\to y'\in\mathcal{R}$ is called \emph{decreasing} if $y\le y'$.

A (stochastic) \emph{propensity function} $\lambda_{y\to y'}$ of a reaction $y\to y'\in\mathcal{R}$ is a nonnegative function defined on $\mathbb{N}^d_0$ which quantifies the likelihood that a reaction fires. The family $\Lambda\coloneqq\{\lambda_{y\to y'}\colon y\to y'\in\mathcal{R}\}$ of propensity functions of a reaction network $\mathcal{R}$ is called the \emph{stochastic kinetics} of $\mathcal{R}$; 
moreover, we call $(\mathcal{R},\Lambda)$ a \emph{stochastic reaction system} (SRS). We will simply use $\cR$ to denote an SRS whenever the stochastic kinetics is clear from the context. 
Given an SRS $(\mathcal{R},\Lambda)$. We say $(\mathcal{R}_*,\Lambda_*)$ is a \emph{sub-reaction system} of $(\mathcal{R},\Lambda)$ if  $\mathcal{R}_*$ consists a subset of reactions in $\mathcal{R}$ and $\Lambda_*\subseteq\Lambda$ consists of propensity functions of reactions in $\mathcal{R}_*$.

\begin{definition}\label{def:generic_kinetics}
    A stochastic kinetics $\Lambda=\{\lambda_{y\to y'}\colon y\to y'\in\mathcal{R}\}$ is called \emph{\textbf{G}eneric} if for every $y\ce{->}y'\in\mathcal{R}$,
\medskip

\noindent (\textbf{G}) 
$\lambda_{y\to y'}(x)>0\quad \iff\quad x\ge y,\quad \  x\in\mathbb{N}^d_0$
\end{definition}

Generic kinetics means that in order for a reaction in the reaction network to fire with a positive probability, there needs to be adequate molecules of species which can constitute the reactant.

 Almost all prevalent stochastic kinetics are generic \cite{AK11}. \textbf{Throughout this paper, we confine to generic SRS.} 

A particular generic stochastic kinetics $\Lambda$ is called \emph{stochastic mass-action kinetics}, 
 if the propensity function of each reaction $y\to y'$ is a product of falling factorials: $$\lambda_{y\to y'}(x)=\kappa_{y\to y'}x^{\underline{y}}$$ In this way, \emph{every SMART is associated with and can be defined by a weighted reaction graph} (with the edge weight being the reaction rate constant). For every SMART, its molecularity is usually referred to as its  \emph{order}. Hence a monomolecular SMART is of first order. 

Furthermore, we define the \emph{joint} of two SRS.

\begin{definition}
Let $(\mathcal{R}_1,\Lambda_1)$ and $(\mathcal{R}_2,\Lambda_2)$ be  two SRS with kinetics  
$\Lambda_i=\{\lambda^{(i)}_{y\to y'}\colon y\to y'\in\mathcal{R}_i\}$ for $i=1,2$. We define the \emph{joint} of two SRS  
as follows: $$\mathcal{R}_1\cup\mathcal{R}_2\coloneqq\{y\to y'\colon y\to y'\in\mathcal{R}_1\ \text{or}\ y\to y'\in\mathcal{R}_2\}$$ where its associated kinetics is given by
$$\lambda^{(1,2)}_{y\to y'}(x) = \begin{cases}\lambda^{(1)}_{y\to y'}(x) + \lambda^{(2)}_{y\to y'}(x),& \text{if}\ y\to y'\in\mathcal{R}_1\cap\mathcal{R}_2,\\
\lambda^{(1)}_{y\to y'}(x),& \text{if}\ y\to y'\in\mathcal{R}_1\setminus\mathcal{R}_2,\\
\lambda^{(2)}_{y\to y'}(x),& \text{if}\ y\to y'\in\mathcal{R}_2\setminus\mathcal{R}_1,
\end{cases}\quad \text{ for } x\in\mathbb{N}^d_0$$ 
\end{definition}

\subsection*{Modeling an SRS}
Given an SRS $(\mathcal{R},\Lambda)$ with $\Lambda=\{\lambda_{y\to y'}\colon y\to y'\in\mathcal{R}\}$, let $X_t$ denote the  counts of species of $\mathcal{R}$ at time $t\ge0$. 
Then $X_t$ is a CTMC on the ambient state space $\mathbb{N}^d_0$ with its \emph{extended generator} $\mathcal{L}$ given by
\[\mathcal{L} f(x) = \sum_{y\to y'\in\mathcal{R}}\lambda_{y\to y'}(x) (f(x+y'-y) - f(x)),\]
for every 
$f\in\mathsf{D}(\mathcal{L})$, the set of all real-valued functions on $ \mathbb{N}^d_0$.


\begin{definition}
Given a generic SRS $(\mathcal{R},\Lambda)$, for two (possibly identical) states $z_1,z_2\in\mathbb{N}^d_0$, we say $z_1$ \emph{leads to} $z_2$ for $(\mathcal{R},\Lambda)$ or $z_2$ is \emph{reachable} from $z_1$ for $(\mathcal{R},\Lambda)$ and denoted by $z_1\rightharpoonup z_2$ if as states of the underlying CTMC, $z_1$ leads to $z_2$ \cite{N98}. In particular, if both $z_1\rightharpoonup z_2$ and $z_2\rightharpoonup z_1$ for $(\mathcal{R},\Lambda)$, then $z_1$ and $z_2$ \emph{communicate} for $(\mathcal{R},\Lambda)$ and is denoted by  $z_1\leftrightarrow z_2$. 
\end{definition}
By genericity of kinetics, $z_1\rightharpoonup z_2$ if and only if there exists a sequence of (possibly repeated) reactions $\{y_j\to y_j'\}_{j=1}^m\subseteq\mathcal{R}$ such that $z_2=z_1+\sum_{i=1}^m(y_i'-y_i)$ and
\[x+\sum_{i=1}^{j-1}(y_i'-y_i)\ge y_j,\quad \text{for}\ j=1,\ldots,m,\]
where by convention $\sum_{i=1}^{0}(y_i'-y_i)=0$.

The state space $\mathbb{N}^d_0$ can be  decomposed into communicating classes of different types \cite{XHW22}. An SRS is \emph{essential} if the ambient space $\mathbb{N}^d_0$ only consists of closed communicating classes.  It is known that \emph{every weakly reversible generic SRS is essential} \cite[Lemma~4.6]{PCK14} (see also \cite{XHW22}).

\begin{definition}\label{def:structural_equivalence}
Let $(\cR,\Lambda)$ and $(\cR',\Lambda')$  be two SRS sharing a common set of $d$ species. We say  $(\cR,\Lambda)$ can be \emph{structurally embedded} into $(\cR',\Lambda')$ and denoted by $(\cR,\Lambda)\hookrightarrow(\cR',\Lambda')$ if for every $x,y$ in the common ambient space $\mathbb{N}^d_0$, $x\rightharpoonup y$ for $(\cR,\Lambda)$ implies that  $x\rightharpoonup y$ for $(\cR',\Lambda')$.
Furthermore,  $(\cR,\Lambda)$ and $(\cR',\Lambda')$ are \emph{structurally equivalent} and denoted by $(\cR,\Lambda)\boldsymbol{\leftrightarrow}(\cR',\Lambda')$ if both $(\cR,\Lambda)\hookrightarrow(\cR',\Lambda')$ and $(\cR',\Lambda')\hookrightarrow(\cR,\Lambda)$. 
\end{definition}

\subsection*{Endotactic reaction networks}

Below, we recall endotactic and strongly endotactic reaction networks introduced in \cite{CNP13} and \cite{GMS14}, respectively.  Both concepts were originally introduced to study \emph{permanence} and \emph{persistence} of deterministic mass-action systems.

\begin{definition}
Let $\mathcal{R}$ be a reaction network. For every  $u\in\mathbb{R}^d\setminus\mathsf{S}_{\mathcal{R}}^{\perp}$, $u$ defines a partial order on $\mathbb{R}^d$: $$y\ge_u z \Leftrightarrow (y-z)\cdot u\ge0;\quad   y>_uz \Leftrightarrow (y-z)\cdot u>0$$ Let $\mathcal{R}_u\coloneqq\{y\to y'\in\mathcal{R}\colon (y'-y)\cdot u\neq0\}$ be the set of reactions whose reaction vectors are non-orthogonal to $u$. Let $\mathcal{C}_{u,+}$ be the set of reactants of the sub reaction network $\mathcal{R}_u$, and $\max_u\mathcal{C}_{u,+}$ be the set of $u$-maximal elements in $\mathcal{C}_{u,+}$.

Then $\mathcal{R}$ is \emph{endotactic} if for every given  $u\in\mathbb{R}^d\setminus\mathsf{S}_{\mathcal{R}}^{\perp}$, 
for every  reaction $y\to y'\in\mathcal{R}_u$ with a $u$-maximal reactant $y\in\max_u\mathcal{C}_{u,+}$, we have $y>_uy'$. Moreover, an endotactic reaction network is \emph{strongly endotactic} if for every $u\in\mathbb{R}^d\setminus\mathsf{S}_{\mathcal{R}}^{\perp}$, there exists $y\to y'\in\mathcal{R}_u$ with $y\in\max_u\mathcal{C}_+$ such that $y>_uy'$.
\end{definition}

 \section{First order endotactic reaction networks}\label{sect:main_results}

Below we focus on first order reaction networks. Given any first order reaction network $(\cC,\cR)$, let $(\cC^0,\cR^0)$ be the (possibly empty) weakly connected component of $(\cC,\cR)$ containing the zero complex and $(\cC^{\sbullet},\cR^{\sbullet})$ the complement of $(\cC^0,\cR^0)$. 
Let $0\le d_0\le d$ be the number species in $\cR^0$. Based on the assumption that purely catalytic species are excluded in any reaction, $\cR^{\sbullet}$ may contain a \emph{proper} subset of species of $\cR$. 

Given any SMART $\mathcal{R}$, 
let $A=(a_{ij})_{d\times d}\in\mathcal{M}_d(\mathbb{R})$ with $$a_{ij}=\sum_{e_i\to y'\in\mathcal{R}}\kappa_{e_i\to y'}(y_j'-y_j),\quad i,j=1,\ldots,d$$ be the \emph{net flow matrix} of $\mathcal{R}$
and $b=(b_1,\ldots,b_d)$ with 
\[b_i=\sum_{0\to y'\in\mathcal{R}}\kappa_{0\to y'}y_i',\quad i=1,\ldots,d\]  the \emph{constant inflow vector} of $\mathcal{R}$. Obviously 
$A$ is a Metzler matrix and $b\ge0$.





\subsection{Essentialness}\label{subsect:structure}
In this subsection, for a first order SMART $\cR$, 
we address the following questions: 
\textbf{\begin{itemize}
    \item[(Q1)] Is the CTMC associated with $\cR^0$ always irreducible on $\mathbb{N}^{d_0}$?
    \item[(Q2)] Is $\cR^{\sbullet}$ weakly reversible and of deficiency zero (WRDZ)?
\end{itemize}}
 With affirmative answers to both \textbf{(Q1)} and \textbf{(Q2)}, one can easily show that $\cR$ is essential. 

Nevertheless, the answers to both questions can be negative in general, with a simple counter-example 
$$\cR\colon S_1\ce{->[$\kappa_1$]}0\quad S_2\ce{->[$\kappa_2$]}S_3$$

In the following, we will confirm the affirmative answers to both questions for every first order endotactic SMART, and hence show that \emph{every first order endotactic SMART is essential} (Theorem~\ref{thm:essential}). Before proceeding directly to the proof of Theorem~\ref{thm:essential}, we first  obtain some intuition from the example below.

\begin{example} Consider the following SMART
\[\cR\colon S_1\ce{->[\kappa_1]}S_2\ce{->[\kappa_2]}0\ce{->[\kappa_3]}2S_1\]
For convenience, we order reactions by the indices of their reaction rate constants. To show $\cR$ is essential, first note that $(0,0),\ (1,0),\ (0,1)$ communicate. This can be verified based on the observation of the path 
$(1,0)\rightharpoonup(0,1)\rightharpoonup(0,0)$ via reactions 1,2 and the path $(0,0)\rightharpoonup(2,0)\rightharpoonup(1,1)\rightharpoonup(1,0)$ via reactions 3,1,2. Then one can prove via repetitions of this cycle consisting of the three states $(0,0),\ (1,0),\ (0,1)$ that $\cR$ is essential due to the genericity of the kinetics. 
\end{example}


To prove Theorem~\ref{thm:essential}, we rely on a characterization (Proposition~\ref{prop:endotactic=weakly-reversible}) as well as a  connectivity property (Proposition~\ref{prop:path-back-to-0}) of first order endotactic reaction networks obtained in \cite{X25a}.

Let $\cR$ be a first order reaction network. Define its \emph{monomerization} $(\cC^{\spadesuit},\cE^{\spadesuit})$ by $$\cE^{\spadesuit}=\cE^*\cup\{0\to S_k\}_{k\in K}\quad \text{and}\quad \cC^{\spadesuit}=\{y,y'\colon y\ce{->[]} y'\in\cE^{\spadesuit}\},$$  where  $\mathcal{R}^* = \{y\to y'\in\cR\colon \|y\|_1=1\}$ consists of all monomolecular reactions in $\mathcal{R}$ and $K = \supp\{y'\colon 0\rightharpoonup y' \text{ for } \cR\}$.  Note that $\cR^{\spadesuit}$ is of first order and is of deficiency zero. 



\begin{proposition}\cite[Theorem~5.10]{X25a}\label{prop:endotactic=weakly-reversible}
Let $\cR$ be a  first order reaction network. Then
\[\cR\ \text{is endotactic}\ \Leftrightarrow \cR^{\spadesuit}\ \text{is endotactic}\ \Leftrightarrow \cR^{\spadesuit}\ \text{is WRDZ}\]
\end{proposition}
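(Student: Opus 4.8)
The plan is to treat the chain as two separate equivalences, exploiting that the deficiency-zero half of ``WRDZ'' is already built into the construction (it is noted above that $\cR^{\spadesuit}$ has deficiency zero), so that the statement reduces to
\[\cR\ \text{is endotactic}\iff\cR^{\spadesuit}\ \text{is endotactic}\iff\cR^{\spadesuit}\ \text{is weakly reversible}.\]
For the second equivalence one direction is free: a weakly reversible network is always endotactic \cite{CNP13}. Hence the genuine work is (A) the equivalence of endotacticity for $\cR$ and $\cR^{\spadesuit}$, and (B) the implication ``$\cR^{\spadesuit}$ endotactic $\Rightarrow$ $\cR^{\spadesuit}$ weakly reversible''.

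For (A) I would first record the simplifications afforded by first order. Every reactant of $\cR$ is either the zero complex or a single species $e_i$; the monomerization alters only the inflow reactions, since the monomolecular reactions $\cR^{*}$ are carried over verbatim, and one checks that $\cR$ and $\cR^{\spadesuit}$ share the same reactant set $\cC_+$, the zero complex surviving in both exactly when $\cR$ admits an inflow, i.e. when $K\neq\emptyset$. Fixing a test direction $u\notin\mathsf{S}_{\cR}^{\perp}$, the only possible discrepancy between the two endotactic tests is carried by the inflows: $0\to y'$ lies in $\cR_u$ iff $y'\cdot u\neq0$, whereas $0\to e_k$ lies in $\cR^{\spadesuit}_u$ iff $u_k\neq0$. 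The workhorse is the decomposition $y'\cdot u=\sum_{k\in\supp y'}y'_k\,u_k$ together with the inclusion $\supp y'\subseteq K$ valid for every inflow product $y'$, which lets me compare the single inequality $y'\cdot u<0$ demanded of $0\to y'$ against the family of inequalities $u_k<0$ demanded of the edges $0\to e_k$, $k\in\supp y'$.

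With this in hand I would verify, direction by direction, that $u$ witnesses a failure of endotacticity for $\cR$ iff it does for $\cR^{\spadesuit}$. When the zero complex is $u$-maximal for both networks, the equivalence follows from the decomposition above (the endotactic inequality for a general inflow splits into, and is recovered from, those for the single-species inflows), while monomolecular reactions contribute identically since they are common. The delicate point is that the sets of $u$-maximal reactants of $\cR_u$ and $\cR^{\spadesuit}_u$ need \emph{not} coincide: a species may be populated only through a chain of $u$-orthogonal inflows followed by a non-orthogonal monomolecular step, so that $0$ is $u$-maximal for $\cR^{\spadesuit}_u$ but not for $\cR_u$. Here I would use the definition $K=\supp\{y'\colon 0\rightharpoonup y'\}$ and the reachability analysis of \cite{X25a} to show that any such configuration already forces the same endotactic verdict on both networks, so that the discrepancy is harmless.

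The hard part is (B): promoting the direction-wise geometric condition on $\cR^{\spadesuit}$ to the global graph-theoretic statement that every reaction lies on a directed cycle. I would split $\cR^{\spadesuit}$ along the decomposition into the component $(\cC^{\sbullet},\cR^{\sbullet})$, all of whose reactants are single species, and the component containing the zero complex. For the former, I would argue that endotacticity of a first order network with no zero complex already pins down the cyclic return structure, giving weak reversibility. For the latter, the clean inflows $0\to e_k$ must be closed into cycles through $0$: here I would invoke the connectivity property (Proposition~\ref{prop:path-back-to-0}) to produce, for each $k\in K$, a directed path from $e_k$ back to $0$ in $\cR^{\spadesuit}$, so that each linkage class becomes strongly connected. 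Reconciling this return structure with the deficiency-zero count --- ensuring no extra complexes or linkage classes spoil the cycle decomposition --- is the step I expect to demand the most care.
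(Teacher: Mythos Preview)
This proposition is not proved in the present paper at all: it is imported from the companion work \cite{X25a} (as Theorem~5.10 there) and invoked here purely as a black box, so there is no in-paper argument to compare your sketch against.

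Assessed on its own, your outline is reasonable in shape---deficiency zero is indeed automatic for $\cR^{\spadesuit}$, so only weak reversibility is at stake, and splitting the comparison in (A) into the shared monomolecular part $\cR^{*}$ versus the rewritten inflows is the natural move. Two cautions, however. First, for (B) you lean on Proposition~\ref{prop:path-back-to-0}, which is itself quoted from \cite{X25a}; the numbering there (Lemma~5.5 preceding Theorem~5.10) suggests this ordering is legitimate, but you are assuming a logical dependence in a paper you have not seen, and you should flag that.

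Second, and more substantively, Proposition~\ref{prop:path-back-to-0} delivers \emph{CTMC reachability} statements of the form $e_j\rightharpoonup 0$, whereas weak reversibility is a statement about directed cycles in the \emph{reaction graph} on complexes. These notions do not coincide on the nose. The monomerization $\cR^{\spadesuit}=\cR^{*}\cup\{0\to e_k\}_{k\in K}$ can still contain a reaction $e_i\to y'$ with $\|y'\|_1\ge 2$ inherited from $\cR^{*}$; since every reactant of $\cR^{\spadesuit}$ has norm at most one, such a product $y'$ has no outgoing edge in the reaction graph and can never lie on a directed cycle. Your sketch produces state-to-state paths $e_k\rightharpoonup 0$ but does not explain why endotacticity forces every product in $\cR^{\spadesuit}$ to have norm at most one---and that exclusion is exactly where the genuine work in \cite{X25a} must happen. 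Until that gap is closed, the implication ``$\cR^{\spadesuit}$ endotactic $\Rightarrow$ $\cR^{\spadesuit}$ weakly reversible'' remains unproved in your outline.
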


\begin{proposition}\cite[Lemma~5.5]{X25a}\label{prop:path-back-to-0}
Let $(\cC,\cR)$ be first order endotactic reaction network of $d$ species. Then $(\cC^0,\cR^0)$ and $(\cC^{\sbullet},\cR^{\sbullet})$ share no species. 
Assume  $\cR^0\neq\emptyset$. Let  $$J=\{j\in[d]\colon\ e_j\rightharpoonup 0\},\ K=\supp\{y'\in\cV^0 \colon 0\rightharpoonup y'\},\ L=\{\ell\in J\setminus K\colon e_{k}\rightharpoonup e_{\ell}, \forall k\in K\}$$ Then
\begin{equation}\label{eq:support_of_non-zero_sources}
K\neq\emptyset;\quad K\cup L= J = \supp\cV^0
\end{equation}

 In other words, for every $j\in[d]$, there exists a path from $e_j$ to $0$ in $\cR^0$ if and only if either there exists a path from $0$ to a complex $y'\in\cV^0$ with $y'_j>0$ or there exists a path from $0$ to a complex $y'\in\cV^0$ with $y'_{k}>0$ and there exists a path from $e_{k}$ to $e_j$.
 In particular, there exists a path from every non-zero reactant in $\cR^0$ to $0$. 
\end{proposition}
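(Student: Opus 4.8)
The plan is to combine two ingredients: a structural classification of the reactions obtained from a few well-chosen test directions $u$, and the weak reversibility of the monomerization $\cR^{\spadesuit}$ supplied by Proposition~\ref{prop:endotactic=weakly-reversible}. The classification comes first. Assuming $\cR^0\neq\emptyset$ (otherwise every claim is vacuous), some reaction is incident to $0$, so $\mathbf 1\notin\mathsf S_{\cR}^{\perp}$ and the direction $u=\mathbf 1$ is admissible. For any monomolecular reaction $e_i\to y'$ with $\|y'\|_1\neq1$ we have $e_i\to y'\in\cR_{\mathbf 1}$ with $e_i$ a $\mathbf 1$-maximal reactant, so endotacticity forces $e_i>_{\mathbf 1}y'$, i.e.\ $\|y'\|_1<1$ and hence $y'=0$. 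Thus every monomolecular reaction is a conversion $e_i\to e_{i'}$ or a degradation $e_i\to0$, while the reactions with reactant $0$ are the productions $0\to y'$. Since degradations and productions are incident to the zero complex, they lie in $\cR^0$; consequently $\cR^{\sbullet}$ consists solely of conversions, and every complex of $\cC^{\sbullet}$ is a single-species complex $e_t$.

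Next I would settle disjointness. Put $T=\supp\cC^{\sbullet}$ and suppose toward a contradiction that $T\cap\supp\cC^0\neq\emptyset$. Any reaction incident to a complex $e_t\in\cC^{\sbullet}$ lies in the same weak component $\cR^{\sbullet}$, hence is a conversion with both complexes in $\cC^{\sbullet}$; therefore no conversion crosses between $T$ and its complement and there is no degradation $e_t\to0$ (which would put $0\in\cC^{\sbullet}$). The only reactions altering $\sum_{t\in T}x_t$ are then productions $0\to y'$ with $\sum_{t\in T}y'_t>0$, and the standing assumption guarantees at least one. Testing $u=\sum_{t\in T}e_t$ (which is not in $\mathsf S_{\cR}^{\perp}$ precisely because of that production) makes $0$ the unique $u$-maximal reactant of $\cR_u$, so endotacticity would demand $0>_u y'$, i.e.\ $\sum_{t\in T}y'_t<0$, a contradiction. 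Hence $\supp\cC^0$ and $\supp\cC^{\sbullet}$ are disjoint; in particular every production injects only into $P:=\supp\cC^0$, so $\sum_{t\in T}x_t$ is conserved by the whole CTMC.

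With disjointness in hand I would prove $J=\supp\cC^0$ and $K\neq\emptyset$. The conservation law gives $J\subseteq P$: a state $e_j$ with $j\in T$ has $\sum_{t\in T}x_t=1$ and can never reach $0$. For $P\subseteq J$ (which contains the ``in particular'' claim) I invoke Proposition~\ref{prop:endotactic=weakly-reversible}: $\cR^{\spadesuit}$ is WRDZ, so its zero component is strongly connected, and its species set is identified with $P$. For each $j\in P$ the complex $e_j$ then reaches $0$ in that component along a directed path meeting $0$ only at its end; such a path uses only conversions followed by a single degradation, and since a single molecule always suffices to fire each step, it lifts verbatim to a CTMC realization $e_j\rightharpoonup0$. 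Strong connectivity also forces an out-edge $0\to S_k$ at $0$, i.e.\ a production in $\cR$, whence $K\neq\emptyset$. It remains to show $K\cup L=J$; as $K\subseteq P$ and $L\subseteq J\setminus K$ by construction, the real content is $P\setminus K\subseteq L$.

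For the last inclusion, note that $0\rightharpoonup y'$ for a production product places $\supp y'$ into $K$, so a species $j\in P\setminus K$ can enter $\cC^0$ only through the single-species complex $e_j$, and $j\notin K$ is equivalent to $0\not\rightharpoonup e_j$ while still $e_j\rightharpoonup0$. For such $j$ I would produce, for every $k\in K$, a conversion route from $e_k$ to $e_j$ inside the strongly connected zero component of $\cR^{\spadesuit}$ and lift it to $e_k\rightharpoonup e_j$, thereby placing $j$ in $L$. This lifting is the main obstacle. Unlike the descent to $0$, a path in $\cR^{\spadesuit}$ from a produced complex toward $e_j$ may traverse $0$ and the monomerized edges $0\to S_k$, which in $\cR$ correspond to multi-molecular productions $0\to y'$; realizing it as a genuine molecule-available path $e_k\rightharpoonup e_j$ then requires clearing the spurious extra molecules injected by such productions \emph{without} exhibiting $e_j$ as reachable from $0$ (which would force $j\in K$). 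Making this bookkeeping between reaction-graph paths in $\cR^{\spadesuit}$ and CTMC paths in $\cR$ precise, and extracting a single conversion route valid simultaneously for all $k\in K$, is where the work concentrates; the set $L$ is exactly the record of those species whose membership in $J$ is certified by this indirect route rather than by direct production from $0$.
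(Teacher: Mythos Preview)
This paper does not itself prove Proposition~\ref{prop:path-back-to-0}: it is quoted from the companion work \cite{X25a} (as Lemma~5.5 there), so there is no in-paper argument to compare against. I therefore assess your attempt on its own.

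Your classification of reactions via $u=\mathbf 1$ and the disjointness argument via $u=\sum_{t\in T}e_t$ are correct and self-contained; so is the inclusion $J\subseteq P:=\supp\cC^0$ obtained from conservation of $\sum_{t\in T}x_t$.

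There are, however, two genuine gaps. First, for $P\subseteq J$ and $K\neq\emptyset$ you appeal to Proposition~\ref{prop:endotactic=weakly-reversible}, which in \cite{X25a} is Theorem~5.10 and is numbered \emph{after} the lemma you are trying to establish; depending on the logical order in \cite{X25a} this risks circularity. More concretely, you assert without proof that the zero component of $\cR^{\spadesuit}$ has species set exactly $P$; this is not immediate, since the weak connection of $e_j$ to $0$ in $\cR$ may pass through production products $y'$ with $\|y'\|_1\ge 2$, and those are not complexes of $\cR^{\spadesuit}$. Both facts can actually be obtained by direct endotacticity tests\----e.g.\ the direction $u=-\mathbf 1$ forces a production (hence $K\neq\emptyset$), and testing $u=\sum_{i\in P'}e_i$ for a hypothetical subset $P'\subseteq P$ closed under out-edges with no outgoing degradation yields $u\in\mathsf S_{\cR}^{\perp}$, contradicting $e_j\in\cC^0$\----but your write-up does not supply these arguments.

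Second, and more seriously, you explicitly leave the inclusion $J\setminus K\subseteq L$ unfinished. You correctly diagnose the difficulty (lifting $\cR^{\spadesuit}$-paths that cross $0$ back to molecule-available CTMC paths in $\cR$ without inadvertently showing $0\rightharpoonup e_j$), but you do not resolve it. The phrase ``a single conversion route valid simultaneously for all $k\in K$'' also hints at a slight misreading: membership in $L$ requires $e_k\rightharpoonup e_\ell$ for \emph{each} $k\in K$ separately, not one common route. As it stands, your proposal handles the structural half of the proposition well, but the combinatorial core\----the identity $K\cup L=J$\----remains only a sketch.
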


\begin{theorem}\label{thm:essential}
Let $\cR$ be a first order endotactic SMART. 
Then $\cR^{\sbullet}$ is WRDZ and the CTMC associated with $\cR^0$ is irreducible on $\mathbb{N}^{d_0}_0$. In particular, $\cR$ is essential.
\end{theorem}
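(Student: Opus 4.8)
The plan is to establish the three conclusions — that $\cR^{\sbullet}$ is WRDZ, that the CTMC of $\cR^0$ is irreducible on $\mathbb{N}^{d_0}_0$, and that $\cR$ is essential — in that logical order, leaning on Proposition~\ref{prop:endotactic=weakly-reversible} and Proposition~\ref{prop:path-back-to-0} as the structural backbone.

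**First** I would treat the claim that $\cR^{\sbullet}$ is WRDZ. Since $\cR$ is first order and endotactic, Proposition~\ref{prop:endotactic=weakly-reversible} tells us the monomerization $\cR^{\spadesuit}$ is WRDZ. The component $\cR^{\sbullet}$ does not involve the zero complex and, by Proposition~\ref{prop:path-back-to-0}, shares no species with $\cR^0$; in particular every reaction in $\cR^{\sbullet}$ is already monomolecular of the form $e_i\to y'$ (no inflow reactions $0\to\cdot$ live in $\cR^{\sbullet}$). Hence $\cR^{\sbullet}$ sits inside the monomolecular part $\cR^*$ used to build $\cR^{\spadesuit}$, and I would argue that it is precisely the union of those strong components of $\cR^{\spadesuit}$ not containing $0$. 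Being a union of strongly connected components of a weakly reversible deficiency-zero network, $\cR^{\sbullet}$ inherits weak reversibility; and since deficiency is additive over connected components, each with its own deficiency zero, $\cR^{\sbullet}$ is again deficiency zero. This gives WRDZ for $\cR^{\sbullet}$.

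**Next**, for irreducibility of $\cR^0$ on $\mathbb{N}^{d_0}_0$, the key input is the final assertion of Proposition~\ref{prop:path-back-to-0}: there is a path from every non-zero reactant in $\cR^0$ back to $0$, together with $K\neq\emptyset$ and $K\cup L=\supp\cV^0$. I would show first that $0$ communicates with every standard basis state $e_j$ for $j\in\supp\cV^0$: the path $e_j\rightharpoonup 0$ is given directly, while $0\rightharpoonup e_j$ follows because $j\in K\cup L$ — if $j\in K$ there is a path from $0$ reaching a complex $y'$ with $y'_j>0$, and if $j\in L$ one routes $0\rightharpoonup e_k$ (for some $k\in K$) and then $e_k\rightharpoonup e_j$. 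Then, invoking the genericity of the kinetics and the description of reachability via admissible reaction sequences, I would bootstrap from these basis-level communications to show that an arbitrary state $x\in\mathbb{N}^{d_0}_0$ communicates with $0$: adding molecules one species at a time via the $0\rightharpoonup e_j$ paths builds any state up from $0$, and the reverse paths $e_j\rightharpoonup 0$ (applied with enough molecules present, which genericity permits once the state is large) drain any state down to $0$. Because all of $\mathbb{N}^{d_0}_0$ then communicates with the single state $0$, the chain is irreducible.

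**Finally**, essentialness of $\cR$ follows by assembling the two components. Proposition~\ref{prop:path-back-to-0} guarantees $\cR^0$ and $\cR^{\sbullet}$ involve disjoint species sets, so the state space factors as a product and the dynamics decouple: on the $\cR^0$-coordinates the chain is irreducible on $\mathbb{N}^{d_0}_0$ (hence that whole factor is a single closed communicating class), and on the $\cR^{\sbullet}$-coordinates the WRDZ structure forces essentialness by the cited fact that every weakly reversible generic SRS is essential (\cite[Lemma~4.6]{PCK14}). Every state of the product space therefore lies in a closed communicating class, which is the definition of $\cR$ being essential. I would note the degenerate cases separately: if $\cR^0=\emptyset$ then $\cR=\cR^{\sbullet}$ is WRDZ and essentialness is immediate, and if $\cR^{\sbullet}=\emptyset$ then $\cR=\cR^0$ and irreducibility gives essentialness directly.

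\textbf{The main obstacle} I anticipate is the bootstrapping step in the irreducibility argument: Proposition~\ref{prop:path-back-to-0} provides reachability relations only at the level of the complexes $0$ and $e_j$, but irreducibility demands communication between \emph{every} pair of lattice points in $\mathbb{N}^{d_0}_0$. The care lies in verifying that the admissibility constraints $x+\sum_{i<j}(y_i'-y_i)\ge y_j$ along a concatenated path are genuinely satisfiable — one must ensure that draining a coordinate to zero and building others up can be scheduled so that no intermediate state violates the nonnegativity and reactant-availability conditions. Genericity of the kinetics is exactly what makes this schedulable, but writing the sequencing cleanly (and confirming the net reaction-vector sum lands on the target state, using that the reaction vectors of the monomolecular and inflow reactions span the relevant lattice directions) is where the real work sits.
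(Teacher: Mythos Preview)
Your proposal is correct and follows essentially the same route as the paper: use Proposition~\ref{prop:endotactic=weakly-reversible} to conclude $\cR^{\spadesuit}$ is WRDZ and identify $\cR^{\sbullet}=(\cR^{\spadesuit})^{\sbullet}$ as a union of its strong components, then use Proposition~\ref{prop:path-back-to-0} to establish $0\leftrightarrow e_j$ for all $j$ and bootstrap to irreducibility on $\mathbb{N}^{d_0}_0$, and finally assemble via the product decomposition.

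One small correction worth flagging: in your irreducibility paragraph, the statement ``if $j\in K$ there is a path from $0$ reaching a complex $y'$ with $y'_j>0$'' gets you only to the state $y'$, not to $e_j$ --- the passage from $y'$ down to $e_j$ is itself a draining argument (run the paths $e_\ell\rightharpoonup 0$ for $\ell\neq j$, and $y'_j-1$ copies of $e_j\rightharpoonup 0$), and this is exactly the step the paper spells out. So the ``main obstacle'' you name is located one step earlier than where you placed it; once $0\leftrightarrow e_j$ is genuinely established for all $j$, the extension to arbitrary lattice points is the routine part.
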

\begin{proof}
First observe that from Proposition~\ref{prop:endotactic=weakly-reversible} $\cR^0$ and $\cR^{\sbullet}$ do not share species, and hence every communicating class 
 of $\cR$ is the Cartesian product of a communicating class of $\cR^0$ in $\mathbb{N}^{d_0}_0$ and a communicating class of $\cR^{\sbullet}$ in $\mathbb{N}^{d-d_0}_0$. Since \emph{weakly reversible generic SRS is essential} \cite[Lemma~4.6]{PCK14} (see also \cite{XHW22}), that $\cR$ is essential follows from (1) $\cR^{\sbullet}$ is WRDZ, and (2) the CTMC associated with $\cR^0$ is irreducible on $\mathbb{N}^{d_0}_0$.

Let $\cR^{\spadesuit}$ be the monomerization of $\cR$. By Proposition~\ref{prop:endotactic=weakly-reversible}, $\cR$ and $\cR^{\spadesuit}$ are both endotactic. Since $\cR^{\spadesuit}$ is weakly reversible, by the definition, $(\cR^{\spadesuit})^{\sbullet}=\cR^{\sbullet}$ and $(\cR^{\spadesuit})^0$ are both weakly reversible as well. Hence (1) follows from that \emph{every first order weakly reversible reaction network is of deficiency zero} \cite{ACK10} (see also \cite{X25a}). Since $\cR^0$ and $\cR^{\sbullet}$ do not share species, it suffices to show (2) under the assumption that $\cR=\cR^0$. To show (2) under this assumption, it further suffices to show (using the connectivity property in Proposition~\ref{prop:path-back-to-0}) that  $0\leftrightarrow e_j$ for $\cR$ for all $j\in[d]$.


By Proposition~\ref{prop:path-back-to-0}, $J= \supp\cC^0 = [d]$. 
It then remains to show $0\rightharpoonup e_j$ for $\cR$ for all $j\in[d]$. By Proposition~\ref{prop:path-back-to-0}, it further suffices to show $0\rightharpoonup e_j$ for $\cR$ for all $j\in K$ only. By the definition of $K$, there exists $0\to y'\in\cE$ such that either $y'_j>0$, or $y'_k>0$ and $e_k\rightharpoonup e_j$. Then it suffices to show $y'\rightharpoonup e_j$ in the former case and $y'\rightharpoonup e_k$ in the latter case. We only prove $y'\rightharpoonup e_j$ as the same arguments apply to the other case. To see this is true, since $\cR$ is generic, one can simply create a path from $y'$ to $e_j$ in terms of $y'_{\ell}$ repetitions of paths from $e_{\ell}$ to $0$ for $\ell\in\supp y'\setminus\{j\}$ and $y'_j-1$ repeated paths from $e_j$ to $0$, with the paths arranged in any (preferred) order.


\end{proof}
\begin{remark}\label{re:structure}

        In order for  Theorem~\ref{thm:essential} to hold, it is easy to observe from the proof that stochastic mass-action kinetics is \emph{not} necessary; instead, genericity of kinetics of $\cR$ (and $\cR^0$) suffices. 
\end{remark}

Out of independent interest, one can easily show that $\cR\boldsymbol{\leftrightarrow}\cR^{\spadesuit}$. Furthermore,  $\cR\hookrightarrow\cR^{\spadesuit}$ holds \emph{regardless of the endotacticity of $\cR$} (Proposition~\ref{prop:embedding}); nevertheless, the converse embedding $\cR^{\spadesuit}\hookrightarrow\cR$ (and hence $\cR^{\spadesuit}\boldsymbol{\leftrightarrow}\cR$) may fail if $\cR$ is \emph{not} endotactic (and thus neither is $\cR^{\spadesuit}$ by Proposition~\ref{prop:endotactic=weakly-reversible}).
\begin{example}\label{ex-not-equivalent}
Consider the following generic first order SRS  $$\cR \colon 0\ce{->[$\kappa_1$]} S_1+S_2$$
Note that $$\cR^{\spadesuit}\colon S_2\ce{<-[$\kappa_1$]}0\ce{->[$\kappa_1$]} S_1$$
Hence $0 \rightharpoonup e_1$ for $\cR^{\spadesuit}$ but $0 \not\rightharpoonup e_1$ for $\cR$. 
\end{example}

Essentialness of endotactic SMART is a property peculiar to \emph{first} order endotactic SMART.
\begin{example}
Consider the following second order SMART
\[\cR\colon 0\ce{->[$\kappa_1$]} S_2\quad 2S_2\ce{->[$\kappa_2$]} S_1+S_2\quad 2S_1\ce{->[$\kappa_3$]} S_1\]
It is readily verified that $\cR$ is endotactic while is \emph{not} essential. Indeed, $(0,1)$ is an absorbing state, 
$\mathbb{N}^2$ is the other closed communicating class, and all other states are open singleton classes. 
It is noteworthy that $\cR$ does have a second order WRDZ realization (in place of the monomerization for first order reaction networks) \begin{center}\begin{tikzcd}[row sep=2em, column sep = 2em]
\cR^{\spadesuit}\colon 0\rar{}{\kappa_1/2} &2S_2\rar{}{\kappa_2/2}&2S_1\arrow[bend left=15]{ll}{\kappa_3/2}
\end{tikzcd}
\end{center}
which as a generic SRS is essential. 
\end{example}


Obviously, endotacticity is \emph{unnecessary} for essentialness of first order SRS. 

\begin{example}\label{ex:endotacticity-is-unnecessary}
Consider the first order SMART $$0\ce{<=>[1][2]}S_1\ce{->[1]}2S_1$$
is \emph{not} endotactic while is structurally equivalent to an endotactic SMART $0\ce{<=>[1][1]}S_1$, and hence is also essential. 
\end{example}

\subsection{Exponential ergodicity}\label{subsect:ergodicity}

Given any SRS, the  
$Q$-matrix of the underlying CTMC modeling this SRS 
uniquely decomposes the ambient state space $\mathbb{N}^d_0$ into communicating classes \cite{N98} (see also \cite{XHW22}). We speak of a dynamical property of an SRS by that of the underlying CTMC.
\begin{definition}
We say an SRS is \emph{recurrent}/\emph{(exponentially) ergodic} on a closed communicating class if the underlying CTMC  on that class is so. An SRS is \emph{transient} if all states in the ambient space are transient for the underlying CTMC. In particular, an essential SRS is \emph{recurrent}/\emph{(exponentially) ergodic} if it is so on every closed communicating class. 
\end{definition}

Thanks to Theorem~\ref{thm:essential}, we know the ambient state space $\mathbb{N}^d_0$ of any first order endotactic SRS is decomposed into closed irreducible components.




\begin{theorem}\label{thm:exp-ergodicity-endotactic}
 
A first order SMART is exponentially ergodic if it is 
\begin{enumerate}
\item[(i)] endotactic, or 
\item[(ii)] strongly endotactic, or 
\item[(iii)] weakly reversible.
\end{enumerate}
\end{theorem}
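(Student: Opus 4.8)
The plan is first to reduce (ii) and (iii) to (i): strong endotacticity implies endotacticity by definition, and weak reversibility implies endotacticity \cite{CNP13}. So it suffices to treat a first order endotactic SMART $\cR$. By Theorem~\ref{thm:essential}, $\cR^0$ and $\cR^{\sbullet}$ share no species and $\cR^{\sbullet}$ is WRDZ, so the underlying CTMC splits into two independent factors, which I would analyse separately. For $\cR^{\sbullet}$, weak reversibility together with the first order assumption forces every complex to be monomolecular (a complex of molecularity $\ge2$ would require an outgoing reaction, contradicting first order), and since $\cR^{\sbullet}$ does not contain the zero complex, all its reactions are conversions $e_i\to e_j$; hence the total molecule count is conserved, each closed communicating class is finite, and a finite irreducible CTMC is trivially exponentially ergodic.

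The substance of the proof is exponential ergodicity of $\cR^0$ on $\mathbb{N}^{d_0}_0$, on which the CTMC is irreducible by Theorem~\ref{thm:essential}. I would use a \emph{linear} Lyapunov function $V(x)=v\cdot x+1$ with a positive vector $v$ to be chosen. For a first order SMART a direct computation gives $\mathcal{L}V(x)=(Av)\cdot x+v\cdot b$, where $A$ and $b$ are the net flow matrix and constant inflow vector of $\cR^0$. Everything then hinges on producing $v>0$ with $Av<0$ componentwise, that is, on the Metzler matrix $A$ being Hurwitz stable.

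To obtain this I would pass to the monomerization. By Proposition~\ref{prop:endotactic=weakly-reversible}, $\cR^{\spadesuit}$ is WRDZ, so as above all of its complexes are monomolecular, and hence every monomolecular reaction $e_i\to y'$ of $\cR$ has a monomolecular product (either $0$ or some $e_j$). Since $A$ depends only on the monomolecular reactions and is therefore unchanged by monomerization, this identifies $A$ with the principal submatrix obtained from the generator of the finite irreducible Markov chain on the complexes $\{0\}\cup\{e_i\}$ of $(\cR^0)^{\spadesuit}$ by deleting the row and column of the zero complex. Proposition~\ref{prop:path-back-to-0} supplies a monomolecular path from every species to $0$, so the associated killed chain is absorbed at the zero complex almost surely; consequently $A$ is invertible with $-A^{-1}\ge0$ (the Green's function of the killed chain), and $v\coloneqq-A^{-1}\mathbf{1}$ satisfies $v>0$ and $Av=-\mathbf{1}<0$.

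With this choice $\mathcal{L}V(x)=-\mathbf{1}\cdot x+v\cdot b\le-cV(x)+d$ for suitable constants $c>0$, $d<\infty$ (using $v\cdot x\le(\max_i v_i)\,\mathbf{1}\cdot x$ for $x\ge0$). As $V\ge1$ has finite and therefore petite sublevel sets, the standard Foster--Lyapunov drift criterion for exponential ergodicity \cite{XHW23} applies to $\cR^0$; combined with the finite-class argument for $\cR^{\sbullet}$, the whole system is exponentially ergodic on each of its closed communicating classes. I expect the main obstacle to be the third step, converting the combinatorial hypothesis of endotacticity into Hurwitz stability of $A$: the monomerization and the path-back-to-$0$ property from \cite{X25a} are precisely the tools that make this translation work, after which the Lyapunov estimate is routine.
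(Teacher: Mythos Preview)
Your proposal is correct and follows essentially the same line as the paper: reduce (ii) and (iii) to (i), split off the conservative factor $\cR^{\sbullet}$ (finite irreducible classes), and on $\cR^0$ use a linear Lyapunov function $v\cdot x$ with $v>0$ satisfying $A_0v<0$. The paper packages the two factors into a single Lyapunov function $(v_0,\mathbf{1})\cdot x$, but that is cosmetic.

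The one genuine difference is how you obtain $v$. The paper simply invokes \cite[Proposition~6.1]{X25a} for the Hurwitz stability of $A_0$ and then cites Metzler matrix theory \cite{BP94} for the existence of a positive $v_0$ with $A_0v_0<0$ (a concrete choice is given only in the subsequent Remark). You instead give a self-contained argument: the monomerization $(\cR^0)^{\spadesuit}$ is WRDZ by Proposition~\ref{prop:endotactic=weakly-reversible}, hence all its complexes are monomolecular, hence $A_0$ is the sub-generator obtained by deleting the absorbing state $0$ from the complex chain, and absorption at $0$ from every $e_i$ (which you get most cleanly from strong connectedness of $(\cR^{\spadesuit})^0$ rather than from Proposition~\ref{prop:path-back-to-0}) yields $-A_0^{-1}\ge0$ and $v=-A_0^{-1}\mathbf{1}>0$ with $A_0v=-\mathbf{1}$. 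This is a slightly more elementary and explicit route to the same inequality; it trades the black-box citation for a short probabilistic argument and delivers a concrete $v$ directly.
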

\begin{proof}
    Below we only prove case (i). Then cases (ii) and (iii) follow from (i) since both strongly endotactic reaction networks and weakly reversible reaction networks are endotactic \cite{CNP13} (see also \cite{X25a}).  

If $\cR^0=\emptyset$, then $\cR$ is conservative since $\mathbf{1}\in\mathsf{S}_{\cR}^{\perp}$ and hence exponential ergodicity is a consequence of the finite state irreducible underlying CTMC \cite[Theorem~5.3]{J13}. Hence we assume w.l.o.g. that $\cR^0\neq\emptyset$. By \cite[Proposition~6.1]{X25a}, the net flow matrix $A_0$ associated with $\cR^0$ is Hurwitz stable. Since $A_0$ is Metzler, $A_0$ is non-singular and $A_0^{-1}$ is non-negative, and there exists a positive column vector $v_0\in\mathbb{R}^{d_0}$ such that $A_0v_0<0$ \cite[Chapter 6, Theorem~2.3]{BP94} (c.f. \cite[Theorem~2.5.3]{HJ94}). It is easy to verify that $(v_0,\mathbf{1}) \cdot x$ is a Lyapunov function for exponential ergodicity of $\cR$ by Proposition~\ref{prop:drift_criterion_for_exp-ergodicity}, where $\mathbf{1}\in\mathbb{R}^{d-d_0}$. 
\end{proof}

\begin{remark}
\begin{itemize}
    \item Theorem~\ref{thm:exp-ergodicity-endotactic} fails in general for higher order SMART, e.g., see Example~\ref{ex:sub-exponential-ergodicity}. 
It follows from \cite{WX20}  that every 1-dimensional endotactic SMART is  exponentially ergodic. In particular, every 1-dimensional weakly reversible SMART is exponentially ergodic. Hence in the light of Corollary~\ref{cor:exponential-ergodicity-for-nonlinear-models}(iii), the maximal \emph{order} and the maximal \emph{dimension} for weakly reversible SMART to be exponentially ergodic (in a \emph{universal} sense, i.e., every such SMART is so) are both equal to one. 
    \item It follows from the proof of \cite[Proposition~6.1]{X25a} that $A_0\mathbf{1}\le0$. Nevertheless,   $V(x)=\mathbf{1}\cdot x=\sum_{i=1}^dx_i$ {may \emph{not} always be} a Lyapunov function for exponential ergodicity of a first order endotactic SMART.  For instance, consider $$\cR\colon S_1\ce{->[\kappa_{1}]}S_2\ce{->[\kappa_{2}]}0\ce{->[\kappa_{0}]}S_1+S_2,$$ where $\cR=\cR^0$ and  $A_0\mathbf{1}=(0,-\kappa_{2})^T\not<0$. Below, we \textbf{provide a choice for $\boldsymbol{v_0}$} in the proof of Theorem~\ref{thm:exp-ergodicity-endotactic}  by construction. Let $\mathbf{I}$ be the $d_0$ by $d_0$ square matrix of all entries being 1. Since $A_0$ is Metlzer and Hurwitz, $\tfrac{1}{2}\mathbf{I}-\epsilon(A_0^{-1})$ is nonnegative for some  small $\epsilon>0$ \cite[Chapter 6, Theorem~2.3]{BP94} (c.f. \cite[Theorem~2.5.3]{HJ94}). Let  $v_0=(\mathbf{I}-\epsilon A_0^{-1})\mathbf{1}$.  Then $v_0\ge\tfrac{1}{2}\mathbf{I}\mathbf{1}=\tfrac{1}{2}d_0\mathbf{1}>0$; moreover, $$A_0v_0=A_0(\mathbf{I}-\epsilon A_0^{-1})\mathbf{1}=d_0A_0\mathbf{1}-\epsilon A_0A_0^{-1}\mathbf{1}\le-\epsilon\mathbf{1}<0$$
    \item Nonetheless, endotacticity is \emph{unnecessary} for exponential ergodicity of first order SRS, e.g., Example~\ref{ex:endotacticity-is-unnecessary}.
\end{itemize}
\end{remark}



\section{Exponential ergodicity of SRS of \emph{nonlinear} propensity functions}\label{sect:nonlinear}




In this section, we first show that every SRS ``dominated'' (in the sense of one of the conditions of \eqref{eq:condition-1}-\eqref{eq:condition-4} below) by a first order endotactic SMART is exponentially ergodic.

The following \textbf{S}ublinear \textbf{U}pper \textbf{B}ound assumption technically ensures the existence of a linear Lyapunov function for exponential ergodicity.
\medskip

\noindent$(\textbf{SUB})$
$(\cR,\Lambda)$ with $\Lambda=\{\lambda_{y\to y'}\colon y\to y'\in\cR\}$, is a generic SRS with sub reaction system $(\widehat{\cR},\widehat{\Lambda})\subseteq(\cR,\Lambda)$ which is a first order endotactic SMART, and there there exists a positive column vector $v_0\in\mathbb{R}^{d_0}$ satisfying $A_0v_0<0$ and a \emph{strictly sub-linear}\footnote{A function $f\colon \mathbb{N}_0^{d}\to\mathbb{R}$ is \emph{strictly sub-linear} if $\lim_{\|x\|_1\to\infty}\tfrac{f(x)}{\|x\|_1}=0$.} function $f\colon \mathbb{N}^d_0\to\mathbb{R}$  such that for all but finitely many $x\in\mathbb{N}_0^d$, 
\begin{equation}\label{eq:condition-1}
\sum_{y\ce{->}y'\in\cR\setminus(\widehat{\cR})^0}\lambda_{y\to y'}(x)((v_0,\mathbf{1})\cdot(y'-y))\le f(x),
\end{equation} 
where $A_0$ is the net flow matrix of $(\widehat{\cR})^0$ (the weakly connected component of $\widehat{\cR}$ containing the zero complex), $\mathbf{1}\in\mathbb{R}^{d-d_0}$, and the left hand side of \eqref{eq:condition-1} becomes zero if $\cR=(\widehat{\cR})^0$.

\begin{theorem}\label{thm:exponential-ergodic-RN}
Let $\cR$ be an SRS fulfilling (\textbf{SUB}) with a first order endotactic SMART $\widehat{\cR}$. Assume $\Gamma$ is a common closed communicating class for $\cR$ and $\widehat{\cR}$. 
 Then $\cR$ is 
 exponentially ergodic on $\Gamma$. 
\end{theorem}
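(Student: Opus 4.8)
The plan is to verify that the linear function $V(x)=(v_0,\mathbf{1})\cdot x$ furnished by $(\textbf{SUB})$ is a Lyapunov function for exponential ergodicity on $\Gamma$, and then to invoke Proposition~\ref{prop:drift_criterion_for_exp-ergodicity}. Since $V$ is linear, $V(x+y'-y)-V(x)=(v_0,\mathbf{1})\cdot(y'-y)$ for every reaction, so the extended generator of $\cR$ reads
\[
\mathcal{L}V(x)=\sum_{y\to y'\in\cR}\lambda_{y\to y'}(x)\,\big((v_0,\mathbf{1})\cdot(y'-y)\big).
\]
First I would split this sum over the reactions of $(\widehat{\cR})^0$ and those of $\cR\setminus(\widehat{\cR})^0$, bounding the latter block directly by $f(x)$ via \eqref{eq:condition-1}.

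For the $(\widehat{\cR})^0$ block, I would use that $(\widehat{\cR},\widehat{\Lambda})$ is a sub reaction system, so these reactions carry their mass-action propensities, and that $(\widehat{\cR})^0$ involves only the first $d_0$ species while $(\widehat{\cR})^{\sbullet}$ involves the remaining ones (Proposition~\ref{prop:path-back-to-0}); hence $(v_0,\mathbf{1})\cdot(y'-y)=v_0\cdot(y'-y)$ there. Writing $x=(x_0,x_{\sbullet})$ accordingly and collecting the monomolecular reactions $e_i\to y'$ against the definition of the net flow matrix gives $\sum_i x_i(A_0v_0)_i$, while the inflow reactions $0\to y'$ contribute the constant $v_0\cdot b_0$ ($b_0$ being the inflow vector of $(\widehat{\cR})^0$). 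Since $A_0v_0<0$, with $c:=\min_i\big(-(A_0v_0)_i\big)>0$ I obtain, for all but finitely many $x$,
\[
\mathcal{L}V(x)\le x_0\cdot(A_0v_0)+v_0\cdot b_0+f(x)\le -c\,\|x_0\|_1+v_0\cdot b_0+f(x).
\]

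The step I expect to be the crux is ruling out escape along the $\sbullet$-species, which the decay $-c\|x_0\|_1$ does not control. Here I would exploit that $\Gamma$ is \emph{also} a communicating class of $\widehat{\cR}$: because $(\widehat{\cR})^0$ and $(\widehat{\cR})^{\sbullet}$ share no species and the chain of $(\widehat{\cR})^0$ is irreducible on $\mathbb{N}^{d_0}_0$ (Theorem~\ref{thm:essential}), $\Gamma=\mathbb{N}^{d_0}_0\times\Gamma_{\sbullet}$ for some communicating class $\Gamma_{\sbullet}$ of $(\widehat{\cR})^{\sbullet}$. By Theorem~\ref{thm:essential} the network $(\widehat{\cR})^{\sbullet}$ is first order and WRDZ; as it excludes the zero complex, weak reversibility forces every complex to be a reactant, hence of the form $e_i$, so every reaction is a conversion $e_i\to e_j$ and preserves $\mathbf{1}\cdot x_{\sbullet}$. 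Therefore $\Gamma_{\sbullet}$ lies in a finite level set $\{\mathbf{1}\cdot x_{\sbullet}=n_0\}$, giving a bound $\|x_{\sbullet}\|_1\le M$ on all of $\Gamma$.

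This confinement closes the estimate. On $\Gamma$ one has $\big|\,\|x\|_1-\|x_0\|_1\big|\le M$, so strict sublinearity of $f$ yields $f(x)\le\tfrac{c}{2}\|x_0\|_1$ outside a finite subset of $\Gamma$; moreover $V$ is norm-like on $\Gamma$ with $\|x_0\|_1\ge (\max_j (v_0)_j)^{-1}(V(x)-M)$. Combining these I would arrive at $\mathcal{L}V(x)\le -\gamma V(x)+b$ off a finite set, for suitable $\gamma>0$ and $b<\infty$, whence Proposition~\ref{prop:drift_criterion_for_exp-ergodicity} delivers exponential ergodicity of $\cR$ on $\Gamma$. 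The only genuinely delicate point is the $\sbullet$-confinement through the WRDZ conversion structure; the surrounding estimates are routine.
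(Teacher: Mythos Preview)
Your proposal is correct and follows essentially the same route as the paper: the linear Lyapunov function $V=(v_0,\mathbf{1})\cdot x$, the split of $\mathcal{L}V$ into the $(\widehat{\cR})^0$ and $\cR\setminus(\widehat{\cR})^0$ blocks, the net-flow-matrix computation giving $(A_0v_0)\cdot x^{(1)}+b_0\cdot v_0$, the bound by $f$ from \eqref{eq:condition-1}, and the appeal to Proposition~\ref{prop:drift_criterion_for_exp-ergodicity}. Your justification that $\Gamma_{\sbullet}$ is finite via the conversion structure of the WRDZ network $(\widehat{\cR})^{\sbullet}$ spells out in detail what the paper states tersely (it simply asserts $\Gamma=\Gamma_1\times\Gamma_2$ with $\Gamma_2$ finite).
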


\begin{proof}
 The proof builds upon that $(v_0,\mathbf{1})\cdot x$ is a linear Lyapunov function for exponential ergodicity, 
 as used in the proof of Theorem~\ref{thm:exp-ergodicity-endotactic}. 



Since $\Gamma$ is a closed communicating class of $\widehat{\cR}$, let   $\Gamma=\Gamma_1\times\Gamma_2$ with $\Gamma_2\in\mathbb{N}^{d-d_0}_0$ being (possibly empty and hence also) finite and $d_0\le d$. Let $\mathcal{L}_{\cR}$ and $\mathcal{L}_{(\widehat{\cR})^0}$ be the extended generators of $\mathcal{R}$  and $(\widehat{\cR})^0$, respectively. It follows from (\textbf{SUB}) that there exists a constant $c>0$ such that for all but finitely many $x=(x^{(1)},x^{(2)})\in\Gamma_1\times \Gamma_2$, 
\begin{align*}
\mathcal{L}_{\mathcal{R}}(v\cdot x) = & \mathcal{L}_{(\widehat{\cR})^0} (v_0\cdot x^{(1)}+\|x^{(2)}\|_1) + \sum_{y\to y'\in \mathcal{R}\setminus(\widehat{\cR})^0}\lambda_{y\to y'}(x)((v_0,\mathbf{1})\cdot(y'-y))\\
=& \mathcal{L}_{(\widehat{\cR})^0} v_0\cdot x^{(1)} + \sum_{y\to y'\in \mathcal{R}\setminus(\widehat{\cR})^0}\lambda_{y\to y'}(x)((v_0,\mathbf{1})\cdot(y'-y))\\
\le & (A_0v_0)\cdot x^{(1)} + b_0\cdot v_0+ f(x)\le -c (v_0\cdot x), 
\end{align*} 
since $f$ is strictly sublinear and $A_0v_0<0$, where $A_0$ and $b_0$ are the net flow matrix and the constant inflow vector of $(\widehat{\cR})^0$, respectively. By Proposition~\ref{prop:drift_criterion_for_exp-ergodicity},  
$\mathcal{R}$ is  exponentially ergodic.
\end{proof}

\begin{corollary}\label{cor:exponential-ergodicity-for-nonlinear-models}
Let $(\cR,\Lambda)$ with $\Lambda=\{\lambda_{y\to y'}\colon y\to y'\in\cR\}$ be a generic SRS containing  a first order endotactic SMART sub reaction system $(\widehat{\cR},\widehat{\Lambda})\subseteq(\cR,\Lambda)$. 
Let $A_0$ be the net flow matrix of $(\widehat{\cR})^0$ with $v=(v_0,\mathbf{1})$ be a positive vector with $A_0v_0<0$. Let $\Gamma$ be a common closed communicating class of $\cR$ and $\widehat{\cR}$. Then $(\mathcal{R},\Lambda)$ is exponentially ergodic on $\Gamma$ if  one of the following conditions holds: 
\begin{enumerate} 
\item[(i)] for every reaction $y\to y'$ in $\mathcal{R}\setminus (\widehat{\cR})^0$, 
there exists a constant $C>0$ such that for all but finitely many $x\in\Gamma$, we have
\begin{equation}\label{eq:condition-2}
\lambda_{y\to y'}(x)(v\cdot(y'-y))\le C
\end{equation}
\item[(ii)] every reaction in $\mathcal{R}\setminus (\widehat{\cR})^0$ is decreasing along $v$: for every $y\ce{->}y'\in\mathcal{R}\setminus (\widehat{\cR})^0$,
\begin{equation}\label{eq:condition-3}
v\cdot(y'-y)\le0
\end{equation}
\item[(iii)] every reaction in $\mathcal{R}\setminus (\widehat{\cR})^0$ is decreasing:  for every $y\ce{->}y'\in\mathcal{R}\setminus (\widehat{\cR})^0$,
\begin{equation}\label{eq:condition-4}
y'\le y
\end{equation}
\end{enumerate}  
\end{corollary}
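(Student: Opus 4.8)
The plan is to show that each of the three conditions (i), (ii), (iii) forces the Sublinear Upper Bound assumption (\textbf{SUB}) to hold, after which Theorem~\ref{thm:exponential-ergodic-RN} applies verbatim and delivers exponential ergodicity of $(\cR,\Lambda)$ on $\Gamma$. The positive vector $v_0$ with $A_0v_0<0$ is already furnished by the hypothesis (its existence being guaranteed by the Hurwitz stability and Metzler property of $A_0$, exactly as in the proof of Theorem~\ref{thm:exp-ergodicity-endotactic}), so the only ingredient of (\textbf{SUB}) left to produce is a strictly sub-linear majorant $f$ for the reaction sum over $\cR\setminus(\widehat{\cR})^0$.

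First I would establish the implication chain (iii) $\Rightarrow$ (ii) $\Rightarrow$ (i), reducing everything to the weakest hypothesis. For (iii) $\Rightarrow$ (ii): since $v=(v_0,\mathbf{1})$ is positive and $y'\le y$ component-wise gives $y'-y\le0$ component-wise, we get $v\cdot(y'-y)\le0$. For (ii) $\Rightarrow$ (i): because $\lambda_{y\to y'}(x)\ge0$ always, the inequality $v\cdot(y'-y)\le0$ yields $\lambda_{y\to y'}(x)(v\cdot(y'-y))\le0$, so the bound in (i) holds with any $C>0$.

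Next, assuming (i), I would dominate the full reaction sum by a constant. A reaction network has only finitely many reactions, so $\cR\setminus(\widehat{\cR})^0$ is finite; taking the finite union of the exceptional sets attached to the individual reactions, there is a single cofinite set of $x$ on which every term obeys its bound $\lambda_{y\to y'}(x)(v\cdot(y'-y))\le C_{y\to y'}$. Summing over reactions then gives
\[
\sum_{y\to y'\in\cR\setminus(\widehat{\cR})^0}\lambda_{y\to y'}(x)(v\cdot(y'-y))\le\sum_{y\to y'\in\cR\setminus(\widehat{\cR})^0}C_{y\to y'}=:C',
\]
a constant. Setting $f\equiv C'$ provides a strictly sub-linear function, since $\lim_{\|x\|_1\to\infty}C'/\|x\|_1=0$, and (\textbf{SUB}) holds with this $f$ and the given $v_0$. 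With (\textbf{SUB}) verified and $\Gamma$ a common closed communicating class of $\cR$ and $\widehat{\cR}$ by hypothesis, Theorem~\ref{thm:exponential-ergodic-RN} concludes the argument.

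I do not expect a genuine obstacle here: the entire content is recognizing that the three displayed conditions are nested and that each forces the reaction sum to be dominated by a constant. The two points that merely require a moment of care are (a) that a constant function is strictly sub-linear, so the trivial majorant is admissible in (\textbf{SUB}), and (b) that finiteness of the reaction set is precisely what lets the per-reaction bounds of (i) combine into a single constant bound valid on a cofinite domain, rather than into something that grows with $\|x\|_1$.
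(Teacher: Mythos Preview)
Your proposal is correct and follows essentially the same approach as the paper: establish the implication chain \text{(iii)} $\Rightarrow$ \text{(ii)} $\Rightarrow$ \text{(i)}, then observe that \text{(i)} yields \textbf{(SUB)} with a constant (hence strictly sub-linear) majorant $f$, after which Theorem~\ref{thm:exponential-ergodic-RN} applies. The paper's own proof is just the one-line assertion of these implications, and you have simply spelled out the details it leaves to the reader.
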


\begin{proof} It is readily verified that $\text{(iii)}\implies \text{(ii)}\implies \text{(i)}$, and (i) implies (\textbf{SUB}), and hence the conclusion follows from Theorem~\ref{thm:exponential-ergodic-RN}. 
\end{proof}



\begin{remark}
    Since any first order endotactic/strongly endotactic/weakly reversible SMART $\cR$ trivially satisfies (\textbf{SUB})  with $\widehat{\cR}=\cR$, Theorem~\ref{thm:exponential-ergodic-RN} generalizes Theorem~\ref{thm:exp-ergodicity-endotactic}.
\end{remark}

\section{Examples}\label{sect:examples}

Below we illustrate by diverse examples the wide applicability of  Theorem~\ref{thm:exponential-ergodic-RN} and Corollary~\ref{cor:exponential-ergodicity-for-nonlinear-models}.

\begin{example}\label{ex:virtual source}
Let $\mathcal{R}_1$ be a first order endotactic SMART consisting of species $S_1,\ S_2$ such that $\cR_1=(\cR_1)^0$. Let $\mathcal{R}=\mathcal{R}_1\cup\mathcal{R}_2$ where  $$\mathcal{R}_2=\{S_1+2S_2\ce{->[1]}4S_2,\ S_1+2S_2\ce{->[1]}3S_1+S_2,\ S_1+2S_2\ce{->[1]}S_2\}$$ It is easy to verify that neither $\cR$ nor $\cR_2$ is endotactic. Let $\widehat{\cR}=\cR_1$ and $v_0\in\mathbb{R}^2_{++}$ be such that $A_0v_0<0$, where $A_0$ is the net flow matrix of $\cR_1$. Note that $\Gamma=\mathbb{N}^2_0$ is the unique closed communicating class for both $\cR$ and $\widehat{\cR}$; moreover, \begin{align*}
    \sum_{y\ce{->}y'\in\cR\setminus(\widehat{\cR})^0}\lambda_{y\to y'}(x)(v_0\cdot(y'-y)) = & \sum_{y\ce{->}y'\in\cR_2}\lambda_{y\to y'}(x)(v_0\cdot(y'-y))\\
    = & x_1x_2(x_2-1)v_0\cdot ((-1,2)+(2,-1)+(-1,-1)) = 0 
\end{align*} Applying Theorem~\ref{thm:exponential-ergodic-RN} with $f\equiv0$, $\cR$ is exponentially ergodic on $\Gamma$. 
\end{example}
\begin{remark}
    As a side note, $S_1+2S_2$ in Example~\ref{ex:virtual source} is usually referred to as a \emph{virtual source} \cite[Definition~4.1]{CJY20} or \emph{ghost vertex} \cite{D23} whose outflows average out (i.e., reaction vectors from that vertex sum up to zero). Virtual sources are frequently used in problems on the topic of \emph{confoundability} of deterministic reaction systems \cite{CP08}.  
\end{remark}


\begin{example}\label{ex:adding-directional-decreasing-reactions}
Consider the following first order SMART:
\[\mathcal{R}\colon 
\begin{tikzcd}[column sep=3em,row sep=3em]
2S_2&\\
S_2 \arrow[d,shift left=0ex,swap,"\kappa_2"]&\\
0\arrow[r,shift left=0.2ex,"\kappa_0"]&S_1 \arrow[ul,shift left=0ex,swap,"\!\!\kappa_1"]  \arrow[luu,shift left=0ex, dashed, swap, "\kappa_3"]
\end{tikzcd}
\] It follows from Proposition~\ref{prop:endotactic=weakly-reversible} that $\cR$ is not endotactic. Nevertheless, removing the reaction in the dashed arrow, we obtain a first order endotactic SMART $\widehat{\cR}\subseteq\cR$. By Theorem~\ref{thm:essential}, $\mathbb{N}^2_0$ is the unique closed communicating class for $\widehat{\mathcal{R}}^0$, and hence so is for $\mathcal{R}$. Observe that  $\cR\setminus(\widehat{\cR})^0 = \{S_1\ce{->[$\kappa_3$]}2S_2\}$. Choose $v=(2,1)\in\mathsf{S}_{\cR\setminus(\widehat{\cR})^0}^{\perp}$ so that  \eqref{eq:condition-3} is fulfilled. Then it follows from Corollary~\ref{cor:exponential-ergodicity-for-nonlinear-models}(ii) that $\cR$ is exponentially ergodic on $\mathbb{N}^2_0$.
\end{example}

\begin{example}
Consider the following second order SMART \[\cR\colon S_1\ce{->[$\kappa_1$]}S_2\ce{->[$\kappa_2$]}0\ce{->[$\kappa_0$]}S_1+S_2\quad S_3\ce{<=>[$\kappa_3$][$\kappa_4$]}S_4\quad S_2+S_3\ce{->[$\kappa_{23}$]}0\]    
 which is \emph{not} essential: It has  open communicating classes $\mathbb{N}^2_0\times\{(n,m-n)\colon n=0,\ldots,m\}$ for $m\in\mathbb{N}$ and a unique closed communicating class    $\Gamma\coloneqq\mathbb{N}^2_0\times\{(0,0)\}$. Nevertheless, it has a first order endotactic SMART $$\widehat{\cR}\colon S_1\ce{->[$\kappa_1$]}S_2\ce{->[$\kappa_2$]}0\ce{->[$\kappa_0$]}S_1+S_2\quad S_3\ce{<=>[$\kappa_3$][$\kappa_4$]}S_4$$ and $\Gamma$ is also a closed communicating class for $\widehat{\cR}$. Hence $$\cR\setminus(\widehat{\cR})^0 = \{S_3\ce{<=>[$\kappa_3$][$\kappa_4$]}S_4\quad S_2+S_3\ce{->[$\kappa_{23}$]}0\}$$ Applying Corollary~\ref{cor:exponential-ergodicity-for-nonlinear-models}(ii) with $v=(2,1,1,1)$, $\cR$ is exponentially ergodic on $\Gamma$.
\end{example}

\begin{example}\label{ex:adding-decreasing-reactions}
Consider the following weakly reversible SMART:
\[\mathcal{R}\colon 
\begin{tikzcd}[column sep=3em,row sep=3em]
S_2 \arrow[d,shift left=0ex,swap,"\kappa_2"]&&m_1S_1 + m_2S_2\arrow[dl,dashed,shift left=0ex,"\kappa_5"]\\
0\arrow[r,shift left=0.2ex,"\kappa_0"]\arrow[urr,shift left=0ex,"\kappa_3"]&S_1 \arrow[ul,shift left=0ex,swap,"\kappa_1"]  &
\end{tikzcd}
\]
where $m_1,\ m_2\in\mathbb{N}$. Removing the  decreasing reaction labeled by a dashed arrow, we obtain a first order endotactic  SMART $\widehat{\mathcal{R}}\subseteq \cR$ by Proposition~\ref{prop:endotactic=weakly-reversible}. 
Similar to the previous example, one can show that $\mathbb{N}^d_0$ is the unique closed communicating class for both $\widehat{\mathcal{R}}$ and $\mathcal{R}$. 
Since $(\widehat{\cR})^0=\widehat{\cR}$, by Corollary~\ref{cor:exponential-ergodicity-for-nonlinear-models}(iii), $\mathcal{R}$ is exponentially ergodic. 
\end{example}





Next, we proceed to provide examples which builds upon first order endotactic SMART modules.

\begin{example}\label{ex:copies-exp-erg}
Consider the following third order SMART \[\mathcal{R}\colon 
\begin{tikzcd}[column sep=3em,row sep=3em]
&S_1+2S_2 \arrow[d,shift left=0ex,dashed,"\kappa_5"] & 2S_1+2S_2\\
S_2 \arrow[d,shift left=0ex,"\kappa_2"] &S_1+S_2\arrow[ur,xshift =1ex,yshift =1ex, pos=.7, dashed, "\kappa_6"]& 2S_1+S_2\arrow[lu, pos=0.2, swap, dashed, "\kappa_4"]\\
0\arrow[ur,xshift =1ex,yshift =1ex, pos=.7, "\kappa_3"]&S_1\arrow[ul, pos=0.2, swap, "\kappa_1"]&
\end{tikzcd}
\] 
Note that the set of reactions in dashed arrows can be regarded as a translation (by $(1,1)$) of the set $\widehat{\cR}$ of remaining reactions in $\cR$ disrespecting the kinetic rate constants. It is readily verified that the underlying CTMC for either $\widehat{\cR}$ or $\cR$ is irreducible on $\Gamma=\mathbb{N}^2_0$. Moreover, it is also straightforward to see that $v_0=(2,1)$ satisfying $A_0v_0<0$ for the net flow matrix $A_0$ of $(\widehat{\cR})^0=\widehat{\cR}$, and 
\begin{align*}
\sum_{y\to y'\in\cR\setminus(\widehat{\cR})^0}\lambda_{y\to y'}(v\cdot(y'-y)) 
= & -x_1x_2(\kappa_4(x_1-1)+\kappa_5(x_2-1)- 3\kappa_6)\le 0,
\end{align*}
for all $x\in\Gamma$ such that $\kappa_4(x_1-1)+\kappa_5(x_2-1)\ge 3\kappa_6$. Since \[\{(x_1,x_2)\colon \kappa_4(x_1-1)+\kappa_5(x_2-1)\boldsymbol{<} 3\kappa_6\}\cap\Gamma\] is finite, \eqref{eq:condition-1} is fulfilled. Applying Theorem~\ref{thm:exponential-ergodic-RN} yields that $\cR$ is exponentially ergodic.
\end{example}

\begin{remark}
As a side note, despite that every \emph{one-dimensional} endotactic SMART is exponentially ergodic in each closed communicating class  \cite[Theorem~4.8]{WX20}, and that the joint of (translations of \emph{different}) one-dimensional endotactic reaction networks is still endotactic, such a joint SMART may \emph{not preserve exponential ergodicity} in general, e.g., Example~\ref{ex:sub-exponential-ergodicity}.
\end{remark}

Below we provide an example of \textbf{a SMART with a first order endotactic SMART \emph{asymptotic limit}}.

\begin{example}\label{ex:bimolecular} Consider the following second order SMART
\[\mathcal{R}\colon 
\begin{tikzcd}[column sep=3em,row sep=3em]
S_2 \arrow[d,shift left=0ex,swap,"\kappa_2"] & &&S_2+S_3 \arrow[d,shift left=0ex,"\kappa_{32}"] & S_2+S_4 \arrow[d,shift left=0ex,swap,"\kappa_{42}"]\\
0 \arrow[r,shift left=0ex,"\kappa_0"] &S_1\arrow[ul,xshift =1ex,yshift =1ex, swap,"\kappa_1"]& S_1+S_3\arrow[ur, "\kappa_{31}"]&S_3\arrow[l,  swap, "\kappa_3"]\arrow[r, yshift =.5ex,  "\kappa_{34}"]&S_4\arrow[r, yshift =0ex,  "\kappa_4"]\arrow[l,  yshift =-.5ex, "\kappa_{43}"]&S_1+S_4\arrow[ul,  swap, "\kappa_{41}"]
\end{tikzcd}
\] 
Let $\cR_1=\{S_3\ce{<=>[\kappa_{34}][\kappa_{43}]}S_4\}$. Obviously, $\cR_1$ is a \emph{conservative} sub reaction network (with $(1,1)\in\mathsf{S}_{\cR_1}^{\perp}$) with its species $S_3$ and $S_4$ being catalysts in part of the reactions in  $\cR\setminus\cR_1$; moreover, the WRDZ SMART $\cR_1$ as modeled by a finite state irreducible CTMC is exponentially ergodic with a Poisson stationary distribution $\pi$ \cite[Theorem~4.2]{ACK10} over a finite set of states. Conditioned on $w=(w_3,w_4)\in\mathbb{N}^2_0$ counts of species $S_3$ and $S_4$, $\cR$ may be represented by the following \textbf{conditional reaction network}:
\begin{equation}\label{eq:conditional_network}
\mathcal{R}_*(w)\colon 
\begin{tikzcd}[column sep=4.5em,row sep=4.5em]
S_2 \arrow[d,shift left=0ex,swap,"\kappa_2+\kappa_{32}w_3+\kappa_{42}w_4"] \\
0 \arrow[r,shift left=0ex,swap,"\kappa_0+\kappa_3w_3+\kappa_4w_4"] &S_1\arrow[ul,xshift =1ex,yshift =1ex, swap,"\kappa_1+\kappa_{31}w_3+\kappa_{41}w_4"]
\end{tikzcd}
\end{equation}It is readily verified that $\cR_*(w)$ as a SMART is monomolecular and WRDZ, and hence by Theorem~\ref{thm:exp-ergodicity-endotactic} and \cite[Theorem~4.2]{ACK10} is exponentially ergodic with a Poisson stationary distribution. Observe that the evolution of the counts of species $S_3$ and $S_4$ is governed by $\cR_1$ and is \emph{independent} of that of the counts of $S_1$ and $S_2$. Therefore the marginal distribution of $\cR$ to species $S_3$ and $S_4$ converges to $\pi$ as time tends to infinity. Akin to and inspired by the limit of \emph{asymptotic autonomous ODEs} \cite{T92}, one can define  
 $\{\cR_*(w)\colon w\in\mathbb{N}^2_0\}$\---\emph{a family of conditional SRS of $\cR$}, as the \emph{asymptotic limit} of $\cR$. Indeed, using similar argument as in Example~\ref{ex:copies-exp-erg}, one can also apply Theorem~\ref{thm:exponential-ergodic-RN} to show that $\cR$ is exponentially ergodic. 
\end{example}
\begin{remark}
\begin{itemize}
    \item It is noteworthy that in comparison, $\cR$ in Example~~\ref{ex:bimolecular} modeled as a deterministic mass-action system has a unique globally asymptotically stable equilibrium in each stoichiometric compatibility class \cite{X25a}. Indeed, as a deterministic reaction system of species $S_1$ and $S_2$ with kinetics depending on concentration of species $S_3$ and $S_4$, the deterministic mass-action system  confined to species $S_1$ and $S_2$ on the stoichiometric compatibility class with positive $c$ total concentration of species $S_3$ and $S_4$ is \emph{asymptotically autonomous with the limit} being the following deterministic mass-action system 
\[\mathcal{R}_*(c)\colon 
\begin{tikzcd}[column sep=5em,row sep=5em]
S_2 \arrow[d,shift left=0ex,swap,"\kappa_2+\frac{\kappa_{32}\kappa_{43}+\kappa_{42}\kappa_{34}}{\kappa_{34}+\kappa_{43}}c"] \\
0 \arrow[r,shift left=0ex,swap,"\kappa_0+ \frac{\kappa_{3}\kappa_{43}+\kappa_{4}\kappa_{34}}{\kappa_{34}+\kappa_{43}}c"] &S_1\arrow[ul,xshift =1ex,yshift =1ex, swap,"\kappa_1+\frac{\kappa_{31}\kappa_{43}+\kappa_{41}\kappa_{34}}{\kappa_{34}+\kappa_{43}}c"]
\end{tikzcd}
\]
\item Example~\ref{ex:bimolecular} provides a little further insight into the study of stability of \emph{randomly switching SRS} \cite{CHX25}. Indeed, from the perspective of randomly switching reaction networks, the reversible reactions 
$S_3\ce{<=>[\kappa_{34}][\kappa_{43}]}S_4$  act as the \emph{environment} (in other words, $S_3$ and $S_4$ together act as a ``switch'' by confining their total molecule counts to be 1).
\end{itemize}
\end{remark}

In Example~\ref{ex:bimolecular}, despite the sub reaction system governing the evolution of the counts of the catalysts $S_3$ and $S_4$ is conservative, \emph{the conservativity of the sub reaction system of catalytic species is unnecessary} for exponential ergodicity of a SMART with a first order endotactic SMART asymptotic limit. Indeed, Theorem~\ref{thm:exponential-ergodic-RN} does \emph{not} apply while the similar argument in that proof still suffices, which makes it appear promising to push further the arguments for a stronger result that we may pursue in the future.

\begin{example}\label{ex:open}
    Consider the following SMART as a variant of Example~\ref{ex:bimolecular}:
    \[\mathcal{R}\colon 
\begin{tikzcd}[column sep=3em,row sep=3em]
S_4\arrow[dr,shift left=0ex, dashed, "\kappa_7"]&&S_2 \arrow[dl,shift left=0ex,swap,"\kappa_2"] & &S_2+S_3 \arrow[d,shift left=0ex,"\kappa_{32}"] & S_2+S_4 \arrow[d,shift left=0ex,swap,"\kappa_{42}"]&\\
S_3\arrow[u,xshift =1ex, dashed, "\kappa_6"]&0 \arrow[l,shift left=0ex, dashed, "\kappa_5"]\arrow[r,shift left=0ex, swap, "\kappa_0"] &S_1\arrow[u,xshift =1ex,swap,"\kappa_1"]& S_1+S_3\arrow[ur, "\kappa_{31}"]&S_3\arrow[l, swap, "\kappa_3"]&S_4\arrow[r, yshift =0ex,  "\kappa_4"]&S_1+S_4\arrow[ul,  swap, "\kappa_{41}"]
\end{tikzcd}
\] 
It is readily verified that the underlying CTMC on $\Gamma=\mathbb{N}^4_0$ is irreducible. Despite similar to Example~\ref{ex:bimolecular}, the evolution of $S_3$ and $S_4$ is independent from the evolution of $S_1$ and $S_2$, it is governed by an \emph{open}\footnote{To account for the case where mass exchange with the ambient is possible, an open reaction system in the sense of Feinberg \cite{F87} refers to one that contains ``pseudo-reactions''\---those contain the zero complex as either the reactant or the product.} first order endotactic SMART\---the sub SRS consisting of reactions in dashed arrows. 
Moreover, $\cR$ has the same asymptotic limit $\{\cR_*(w)\colon w\in\mathbb{N}^2_0\}$ as defined by the reaction network \eqref{eq:conditional_network}. 

Note that reaction graph of $\cR$ is composed of three strongly connected components. To apply Theorem~\ref{thm:exponential-ergodic-RN}, in order for $\Gamma$ to be a closed communicating class also for $\widehat{\cR}$, based on the characterization of first order endotactic reaction networks from Proposition~\ref{prop:endotactic=weakly-reversible}, we can only choose $\widehat{\cR}$ to be the strongly connected component of $\cR$ on the left (rather than choose one of the two triangles in that component). Without much effort, one can verify that there exists no $v_0\in\mathbb{R}^4_{++}$ fulfilling \eqref{eq:condition-1} and hence Theorem~\ref{thm:exp-ergodicity-endotactic} is \emph{not} applicable. Nevertheless, there still exists a linear Lyapunov function for exponential ergodicity. Let $v=(2,1,2c,c)$ with a positive constant $c>\max\{\tfrac{\kappa_3}{\kappa_6},\tfrac{\kappa_4}{\kappa_7}\}$, then
\begin{align*}
    \mathcal{L}_{\cR} (v\cdot x) = &\sum_{y\to y'\in\cR}\lambda_{y\to y'}(x) v\cdot(y'-y)\\
    &-(v_1-v_2)x_1(\kappa_1+\kappa_{31}x_3+\kappa_{41}x_4)-v_2x_2(\kappa_2+\kappa_{32}x_3+\kappa_{42}x_4)\\
   = & 2c\kappa_5+\kappa_0-(c\kappa_6-\kappa_3)x_3-(c\kappa_7-\kappa_4)x_4\\
    &-x_1(\kappa_1+\kappa_{31}x_3+\kappa_{41}x_4)-x_2(\kappa_2+\kappa_{32}x_3+\kappa_{42}x_4)\\
    \le&2c\kappa_5+\kappa_0-C (x\cdot v),
\end{align*}
where $C=\min\{\tfrac{\kappa_1}{2},\kappa_2,\tfrac{\kappa_6}{2}-\tfrac{\kappa_3}{2c},\kappa_7-\tfrac{\kappa_4}{c}\}>0$. This yields the exponential ergodicity of $\cR$ by Proposition~\ref{prop:drift_criterion_for_exp-ergodicity}.
\end{example}

One might tend to believe that Theorem~\ref{thm:exponential-ergodic-RN} applies to or a linear Lyapunov function exists for Example~\ref{ex:copies-exp-erg}, Example~\ref{ex:bimolecular}, or Example~\ref{ex:open}, is a \emph{coincidence} with the fact that in these examples higher order reactions constitute (in part) translations of a same first order endotactic reaction network as a building block. The following example shows that Theorem~\ref{thm:exponential-ergodic-RN} is \emph{not} limited to these patterns.



    \begin{example}\label{ex:3copies-exp-ergodic}
Consider the following third order SMART \[\mathcal{R}\colon 
\begin{tikzcd}[column sep=3em,row sep=3em]
&S_1+2S_2 \arrow[d,shift left=0ex,swap,"\kappa_8",dashed] & \\
S_2 \arrow[d,shift left=0ex,swap,"\kappa_2"] &S_1+S_2\arrow[r,shift left=0ex,"\kappa_6",dashed]\arrow[dr,shift left=0ex,"\kappa_4"]& 2S_1+S_2\arrow[lu,shift left=0ex,swap,"\kappa_7",dashed]\\
0\arrow[r,shift left=0ex,"\kappa_0"]&S_1\arrow[ul,shift left=0ex,swap,"\kappa_1"]\arrow[u,shift left=0ex,"\kappa_3"]&2S_1\arrow[l,shift left=0ex,"\kappa_5",swap]
\end{tikzcd}
\]  Let $\cR_1$ and $\cR_2$ be the lower left and lower right triangular sub reaction networks in $\cR$, respectively. Observe that $\cR_2$ is \emph{not} a translation of $\cR_1$, but a translation of the \emph{reverse} of $\cR_1$ (in the sense that all reactions in $\cR_1$ are reversed). Moreover, $\cR$ and $\cR_1$ share the unique  communicating class $\Gamma=\mathbb{N}^2_0$ which is closed. Choose $\widehat{\cR}=\cR_1$, and let $v=(2,1)$ such that $A_1v<0$, where $A_1$ is the net flow matrix of $\cR_1$. Straightforward calculations yield
\begin{align*}
   \sum_{y\to y'\in\cR\setminus(\widehat{\cR})^0}\lambda_{y\to y'}(v\cdot(y'-y)) = & x_1g(x),
\end{align*}
where $$g(x)=\kappa_3+\kappa_4x_2-2(x_1-1)\kappa_5 +x_2(2\kappa_6-\kappa_7(x_1-1)-\kappa_8(x_2-1))$$ Since $\{x\colon g(x)>0\}\cap\mathbb{R}^2_+$ is bounded, we have $\{x\colon g(x)>0\}\cap\mathbb{N}^2_0$ is finite. Hence \eqref{eq:condition-1} is fulfilled with $f\equiv0$ and it follows from Theorem~\ref{thm:exponential-ergodic-RN} that $\cR$ is exponentially ergodic.  
\end{example}

\begin{example}\label{ex:swap-2RNs}
Removing the set of reactions in dashed arrows from the SMART in Example~\ref{ex:3copies-exp-ergodic}, we obtain a second order SMART: $\cR=\cR_1\cup\cR_2$, where $\cR_1$ and $\cR_2$ are defined in Example~\ref{ex:3copies-exp-ergodic}. Since $\cR_1\subseteq\cR$, $\mathbb{N}^2_0$ is again the unique closed communicating class for $\cR$. 
The net flow matrices of $\cR_1$ and $\cR_2$ are given by \begin{equation}\label{eq:A_1}
    A_1=\begin{bmatrix}
    -\kappa_1&\kappa_1\\
    0&-\kappa_2
\end{bmatrix},\quad A_2\begin{bmatrix}
-\kappa_5&0\\
    \kappa_4&-\kappa_4
\end{bmatrix}
\end{equation}Observe that \emph{$A_1$ and $A_2$ do not share a common decreasing direction}: For any positive vector $v>0$ satisfying $A_1v<0$, we have $A_2v\not\le0$. Because of this observation, it is straightforward to check that Theorem~\ref{thm:exponential-ergodic-RN} fails to apply to $\cR$. Nevertheless, $\mathbf{1}\cdot x$ is a Lyapunov function for exponentially ergodicity of $\cR$, since 
\begin{align*}
    \mathcal{L}_{\cR}(\mathbf{1}\cdot x)  = &  -\kappa_5x_1(x_1-1)+\kappa_3x_1-\kappa_2x_2+\kappa_0\\
     \le & -\kappa_2 (x_1+x_2)+\frac{(\kappa_2+\kappa_3+\kappa_5)^2}{4\kappa_5}+\kappa_0 \le -\frac{\kappa_2}{2}\mathbf{1}\cdot x,
\end{align*}
for all but finitely many $x\in\Gamma$.

\end{example}

\section{Discussion and Outlooks}\label{sect:discussion}

Extensive generalization of these examples in Section~\ref{sect:nonlinear} is left for a future work. A careful reader may notice that we \emph{lack} an example illustrating the case of Corollary~\ref{cor:exponential-ergodicity-for-nonlinear-models}(i). An SRS with \emph{non} mass action kinetics might be easily constructed as an example, however, a SMART example seems nontrivial to the author.

Based on Theorem~\ref{thm:exp-ergodicity-endotactic}, \cite[Lemma~4.15]{X25a}, as well as \cite[Theorem~4, Theorem~8]{CHX25}, one can show that \textbf{a Markov process that \emph{randomly switches among finitely many first order endotactic SMART} is exponentially ergodic for both small and large switching rates} \cite{X26}; furthermore, different \emph{types} of examples \cite{X26} suggest that \emph{such exponential ergodicity result holds regardless of the switching rates}. It is noteworthy that such a class of randomly switching first order SMART can be represented as second order SMART, with the added \emph{environmental} species that independently evolve as a sub conservative weakly reversible SMART as switches that catalyze other first order reactions. Hence understanding stability of such a class of SMART may also improve our understanding of Conjecture~\ref{conj:positive-recurrence} for bimolecular SMART.

\appendix

\section{Structural embedding of a first order reaction network into its monomerization}

\begin{proposition}\label{prop:embedding}
    Let $\cR$ be a first order reaction network and $\cR^{\spadesuit}$ be its monomerization. Assume $\cR$ and $\cR^{\spadesuit}$ as SRS are both generic. Then $\cR\hookrightarrow\cR^{\spadesuit}$.
\end{proposition}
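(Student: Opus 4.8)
The plan is to prove the structural embedding directly from the definition of reachability, by performing ``path surgery'': I take a reaction sequence in $\cR$ that witnesses $x\rightharpoonup y$ and transform it into one for $\cR^{\spadesuit}$ with the same endpoints. First I would recall that, by genericity, $x\rightharpoonup y$ for $\cR$ means there is a finite (possibly repeating) sequence $\{y_j\to y_j'\}_{j=1}^m\subseteq\cR$ with $y=x+\sum_{i=1}^m(y_i'-y_i)$ and $x+\sum_{i=1}^{j-1}(y_i'-y_i)\ge y_j$ for each $j$. Since $\cR$ is first order, every reactant $y_j$ satisfies $\|y_j\|_1\le 1$, so $y_j\in\{0,e_1,\dots,e_d\}$; hence each reaction is either a \emph{monomolecular} reaction $e_i\to y'$ (those of $\ell_1$-norm one, i.e.\ exactly the members of $\cR^*\subseteq\cR^{\spadesuit}$) or an \emph{inflow} reaction $0\to y'$.

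The key observation is that the monomolecular reactions already lie in $\cR^{\spadesuit}$, while each inflow reaction is simulated by the elementary inflows $\{0\to S_k\}_{k\in K}$ adjoined in the monomerization. Concretely, I would leave every monomolecular reaction in the sequence unchanged, and replace each occurrence of an inflow reaction $0\to y'$ by the block consisting of $y'_k$ copies of $0\to S_k$ for each $k\in\supp y'$, arranged in any order. For this block to be legitimate I need $0\to S_k\in\cR^{\spadesuit}$ whenever $y'_k>0$; this holds because $0\to y'\in\cR$ forces $0\rightharpoonup y'$ via that single reaction, so $\supp y'\subseteq\supp\{y'\colon 0\rightharpoonup y'\}=K$, and thus each such $S_k$ is a product of one of the added reactions.

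It then remains to verify that the transformed sequence witnesses $x\rightharpoonup y$ for $\cR^{\spadesuit}$. The net effect is unchanged, since the reaction vector of $0\to y'$ is $y'$ and the substituting block contributes $\sum_{k\in\supp y'}y'_k\,e_k=y'$; hence the running partial sums agree with those of the original $\cR$-path at the boundary of each block, and in particular the terminal state is still $y$. The reactant conditions are likewise preserved: for the retained monomolecular reactions the reactant and the ambient state coincide with the $\cR$-path, so the inequalities carry over verbatim, while for the reactions $0\to S_k$ inside a block the reactant is $0$, so the condition $(\,\cdot\,)\ge 0$ holds trivially at every intermediate (nonnegative) state. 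This yields $\cR\hookrightarrow\cR^{\spadesuit}$, and I would stress that endotacticity is never used, matching the assertion that the embedding is unconditional.

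I do not expect a genuine obstacle here; the argument is essentially careful bookkeeping of the block substitution. The only points demanding attention are the membership $\supp y'\subseteq K$ for each inflow reaction\---guaranteeing that every elementary inflow $0\to S_k$ needed by the simulation is actually present in $\cR^{\spadesuit}$\---and the observation that the reactant condition for each substituting block is automatic precisely because the reactant is the zero complex.
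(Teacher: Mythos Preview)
Your proposal is correct and follows essentially the same approach as the paper's own proof: both arguments replace each inflow reaction $0\to y'$ by the block of elementary inflows $0\to S_k$ (for $k\in\supp y'$) and leave the monomolecular reactions untouched, relying on $\supp y'\subseteq K$ to guarantee that the needed $0\to S_k$ are present in $\cR^{\spadesuit}$. Your write-up is in fact a bit more explicit than the paper's in verifying the reactant inequalities along the transformed path.
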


\begin{proof}
    Let $x\rightharpoonup z$ for $\mathcal{R}$. Then there exists a sequence of (possibly repeated) reactions $y_i\ce{->} y_i'\in\mathcal{R}$ for $i=1,\ldots,m$ such that $$x+\sum_{j=1}^{i-1}(y_j'-y_j)\ge y_i,\quad i=1,\ldots,m$$
where $\sum_{j=1}^0(y_j'-y_j)=0$ by convention, and $x+\sum_{j=1}^{m-1}(y_j'-y_j)=z$. We label these reactions by the index. Assume w.l.o.g. that $y_i=0$ for some $1\le i\le m$. Otherwise, by the definition of monomerization, $y_i\ce{->} y_i'\in\mathcal{R}^{\spadesuit}$ for all $i=1,\ldots,m$ and hence $x\rightharpoonup z$ for $\cR^{\spadesuit}$. We further assume the $i$-th reaction is the first one with a zero reactant (so that $y_j\neq0$ for $j<i$). It suffices to show that $x+\sum_{j=1}^{i-1}(y_j'-y_j)\rightharpoonup x+\sum_{j=1}^{i}(y_j'-y_j)$ for $\mathcal{R}^{\spadesuit}$. Note that $x+\sum_{j=1}^{i}(y_j'-y_j)=x+\sum_{j=1}^{i-1}(y_j'-y_j)+y_i'$. By the definition of $\mathcal{R}^{\spadesuit}$, $0\ce{->}S_k\in\mathcal{R}^{\spadesuit}$ for each $k\in\supp y_i'$. Hence the state $x+\sum_{j=1}^{i}(y_j'-y_j)$ can be reached from the state $x+\sum_{j=1}^{i-1}(y_j'-y_j)$ via repetitions of reactions of $0\ce{->}S_k$ for $(y_i')_k$ times for all $k\in\supp y_i'$. Indeed, all reactions $0\ce{->}S_k\in\mathcal{R}^{\spadesuit}$ are active on any intermediate state connecting the state $x+\sum_{j=1}^{i-1}(y_j'-y_j)$ leads to the state $x+\sum_{j=1}^{i}(y_j'-y_j)$ since any state in the state space is no smaller than $0$ component-wise. Now based on the same argument one can show by induction that $x+\sum_{j=1}^{i-1}(y_j'-y_j)$ to the state $x+\sum_{j=1}^{i}(y_j'-y_j)$ for the cases  where $i$ is not the first reaction with a zero reactant. This shows 
$x\rightharpoonup z$ for $\cR^{\spadesuit}$. 
\end{proof}

\section{Lyapunov-Foster-Lyapunov criterion for (exponential) ergodicity}
A non-negative function $V$ defined on a unbounded subset of $\mathbb{R}^d$ 
 is \emph{norm-like} \cite{MT93} if $\lim\limits_{\|x\|_1\to\infty}V(x)=\infty$. 
\begin{proposition}\cite[Theorem~6.1]{MT93}\label{prop:drift_criterion_for_exp-ergodicity}
Let $X_t$ be an irreducible CTMC on the state space $\Gamma \subseteq \mathbb{R}^d$ and $\cL$ be its extended  generator. Then 
\begin{itemize}
    \item 
$X_t$ is ergodic on $\Gamma$ if there exists a positive constant $C$ and a non-negative norm-like function $V$ such that \[\mathcal{L}V(x)\le-C\] for all but finitely many states $x\in\Gamma$; 
\item $X_t$ is exponentially ergodic on $\Gamma$ if  there exists a positive constant $C$ and a non-negative norm-like function $V$ such that \[\mathcal{L}V(x)\le-C V(x)\] for all but finitely many states $x\in\Gamma$. 
\end{itemize}
 \end{proposition}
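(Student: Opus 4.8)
The plan is to recognize this as the classical Foster--Lyapunov drift criterion of Meyn and Tweedie and to reconstruct its proof via Dynkin's formula together with the theory of petite sets. Throughout, write $F\subseteq\Gamma$ for the finite exceptional set on which the stated drift inequality may fail, and observe that on a countable state space irreducibility of $X_t$ forces every nonempty finite set---in particular $F$---to be petite (indeed small) for the resolvent chain. This is the structural ingredient that converts a pointwise drift estimate outside $F$ into a genuine recurrence and convergence statement. A preliminary point to discharge is non-explosivity: the drift inequality, applied along the explosion-approximating times $\sigma_n$, bounds $V(X_{t\wedge\tau_F\wedge\sigma_n})$ by $V(x)$ and rules out escape to infinity in finite time.

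For the ergodicity claim I would fix a state $x$ and run Dynkin's formula against the stopping time $\tau_F=\inf\{t\ge0\colon X_t\in F\}$. Since $\mathcal{L}V\le-C$ off $F$ and $V\ge0$,
\[
0\le \mathbb{E}_x[V(X_{t\wedge\tau_F})] = V(x)+\mathbb{E}_x\Big[\int_0^{t\wedge\tau_F}\mathcal{L}V(X_s)\,ds\Big]\le V(x)-C\,\mathbb{E}_x[t\wedge\tau_F],
\]
so $\mathbb{E}_x[t\wedge\tau_F]\le V(x)/C$; letting $t\to\infty$ and invoking monotone convergence gives $\mathbb{E}_x[\tau_F]\le V(x)/C<\infty$. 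Thus the expected hitting time of the finite set $F$ is finite from every initial state. Combined with irreducibility (which makes $F$ petite and guarantees finite return times), this establishes positive recurrence, and a positive-recurrent irreducible CTMC is ergodic.

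For exponential ergodicity I would upgrade the pointwise bound to a \emph{global geometric drift}. Because the state space is discrete and each reaction contributes a single jump, $\mathcal{L}V$ is finite on the finite set $F$, so with $b:=\sup_{x\in F}\big(\mathcal{L}V(x)+C\,V(x)\big)<\infty$ we obtain
\[
\mathcal{L}V(x)\le -C\,V(x)+b\,\mathbbm{1}_F(x),\qquad x\in\Gamma,
\]
which is exactly the geometric Foster--Lyapunov condition for a petite set $F$. The quantitative route is to note that $t\mapsto e^{\beta t}V(X_t)$ is a supermartingale up to $\tau_F$ for $0<\beta\le C$, since the space--time generator satisfies $\mathcal{L}(e^{\beta t}V)=e^{\beta t}(\beta V+\mathcal{L}V)\le e^{\beta t}(\beta-C)V\le0$ off $F$; this yields the geometric moment bound $\mathbb{E}_x[e^{\beta\tau_F}]\le V(x)$ for returns to $F$. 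Geometric return times to a petite set are then equivalent, by the Meyn--Tweedie regeneration theory, to $V$-uniform ergodicity, i.e. constants $R<\infty$ and $\rho\in(0,1)$ with $\|P^t(x,\cdot)-\pi\|_{V}\le R\,V(x)\,\rho^{\,t}$, which is the asserted exponential ergodicity.

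The step I expect to be the main obstacle is the passage from the \emph{hitting-time moment bounds} to the \emph{convergence rate in total variation}: deriving finite and geometric moments of $\tau_F$ from Dynkin's formula is routine, but the substantive content is the regeneration argument showing that petiteness of $F$ together with geometric return times produces a genuine geometric contraction of the transition semigroup. This is precisely the part carried out in \cite{MT93}. Since the statement is quoted verbatim as \cite[Theorem~6.1]{MT93}, in the paper itself one simply cites it; the sketch above indicates how that citation is justified.
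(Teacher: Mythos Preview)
Your proposal is correct, and you have already identified the key point: the paper does not prove this proposition at all but simply cites it as \cite[Theorem~6.1]{MT93}, so your sketch of the Meyn--Tweedie argument (Dynkin's formula for hitting-time moments, petiteness of finite sets under irreducibility, and the regeneration/coupling step) is an accurate reconstruction of what that citation contains rather than of anything in the paper itself. One minor technical quibble: the bound $\mathbb{E}_x[e^{\beta\tau_F}]\le V(x)$ as written requires $V\ge 1$ on $F$ (otherwise replace $V$ by $V+1$), but this does not affect the argument.
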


\bibliographystyle{plain}
{\small\bibliography{references}}

\end{document}